\newcommand{\mb}{\mathbb}
\newcommand{\mc}{\mathcal}
\def \a{\alpha}   \def \d{\delta}
   \def \e{\epsilon}
\def \s{\sigma} \def \l{\lambda}  
\def \k{\kappa}  
\def \di{\mathrm{dist}} \def \spa{\mathrm{span}}
\def \L{\Lambda}
\newtheorem{theorem}{Theorem}[section]
\newtheorem{remark}{Remark}[section]
\newtheorem{lemma}[theorem]{Lemma}
\newtheorem{cor}[theorem]{Corollary}
\newtheorem{definition}[theorem]{Definition} 
\begin{document}

\subjclass[2000]{Primary: 37D45, 37C40} \keywords{SRB measure, infinite dimensional dynamical systems, partial hyperbolicity, chaotic behavior.  \\  This work is partially supported by grants from CNSF and NSF}


 \author{Zeng Lian} \address[Zeng Lian] {School of Mathematical Sciences\\ Sichuan University\\
    Chengdu, 610065, China} \email[Z.~Lian]{zenglian@gmail.com}

\author{Peidong Liu} \address[Peidong Liu] {School of Mathematical Sciences\\ Peking University\\
    Beijing, 100871, P. R. China} \email[P.~Liu]{lpd@pku.edu.cn}

\author{Kening Lu} \address[Kening Lu] {  Department of Mathematics\\
 Brigham Young University\\
 Provo, Utah 84602, USA}
\email[k.~Lu]{klu@math.byu.edu}

\title[Existence of SRB Measures for  A Class of Partially
 Hyperbolic Attractors in Banach spaces]{Existence of SRB Measures for  A Class of Partially
 Hyperbolic Attractors in Banach spaces}

\pagestyle{plain}

\begin{abstract} {In this paper, we study the existence of SRB measures for infinite dimensional dynamical systems in a Banach space. We show that if the system has a partially hyperbolic attractor with nontrivial finite dimensional unstable directions, then it has an SRB measure.}
\end{abstract}
 \maketitle

\section{Introduction}

In smooth ergodic theory of finite dimensional dynamical systems, SRB measures (named after Sinai, Ruelle and Bowen who discovered them for uniformly hyperbolic attractors) are technically
defined as those invariant measures which have smooth conditional measures on  unstable manifolds, and this property  was in turn characterized as satisfying Pesin entropy formula (which, roughly
speaking, is an equality between entropy and  exponential volume expansion rate along unstable manifolds, see \cite{P}) (\cite{LY1}).  When the system is dissipative and there is no zero Lyapunov exponent,
SRB measures describe  the asymptotic behaviors of orbits with initial points in a positive Lebesgue measure set and thus are recognized as being physically significant. On the other hand, the theory
of SRB measures has led to significant new ideas in nonequilibrium statistical mechanics (\cite{Ruelle99}).

Which dynamical systems have SRB measures ? This has always been a challenging problem since their discovery in the 1970s.  Several classes of results have been obtained in this direction, including
partially hyperbolic attractors, H\'enon-like attractors, strange attractors arising from Hopf bifurcations etc.  Here we do not try to cover these progresses in detail but rather refer to the survey
paper \cite{Young2}, the relevant Palis conjecture \cite{Palis} and the book \cite{BDV}.

For better understanding of dynamical behaviors on attractors of dissipative partial differential equations, a program of extending the ideas of smooth ergodic theory of finite dimensional dynamical systems, especially of SRB measures,  to infinite dimensional setting was proposed by Eckmann and Ruelle \cite{ER}.  Several progresses have been made in this direction, among them we mention
\cite{M},  \cite{R}, \cite{T}, \cite{LL}, \cite{LSh}, \cite{LY}, \cite{LWY}, \cite{LLL} and \cite{AY}.  As for existence of SRB measures, we refer to \cite{LWY} and \cite{LLL}.

As a sequel to  \cite{LLL} which mainly deals with Hilbert spaces, this paper  is devoted to the existence of SRB measures for infinite dimensional systems in a separable Banach space.
Employing ideas of \cite{PS} and \cite{Young3},  we construct SRB measures on  a  partially hyperbolic attractor  of a differentiable map in such a Banach space
with nontrivial finite dimensional unstable directions.  Our aim is to understand dynamical behaviors of  the time-one map or a Poincar\'e section map of the solution flow for dissipative partial differential equations
(for example,  a parabolic one) with a Banach phase space such as $L^p$, $p\not=2$ (see,  for instant, \cite{Henry}).
As a consequence of our result,  a  partially hyperbolic attractor under consideration is chaotic in the sense that it contains a full weak horseshoe as introduced in \cite{HuangLu}, since
Pesin entropy formula proved by  \cite{AY} in case of Banach spaces together with the variational principle gives positive topological entropy which, by \cite{HuangLu}, implies existence of
such a horseshoe.
We remark that, as in the finite dimensional case, finding
concrete examples of PDEs or ODEs, to whose time-one maps or Poincar\'e section maps the results are applicable, is possibly a more challenging problem.

In \cite{LLL}, we established the existence of SRB measures and their basic properties for partially hyperbolic attractors in a separable Hilbert space, where the Lebesgue measures and Jacobians play key roles  as in all the previous work on SRB measures. In an Euclidean space or a Hilbert space, an inner product uniquely induces a system of Lebesgue measures in a natural way, and then the corresponding Jacobians are naturally defined and possess sufficiently nice properties; while in Banach spaces, there is no such obvious choice on the systems of Lebesgue measures. In this case, for each fixed system of Lebesgue measures, it is pointwise defined and only possesses certain regularity. This makes the method used in \cite{LLL} not  work, since one can not expect the density function of the target measure to be continuous any more and it is difficult to build the connection between the weak* limit properties of the pushed forward Lebesgue measures on unstable manifolds and the target SRB properties. To overcome this, we employ a much more delicate way which is inspired by Rohlin \cite{Rok}. We need to reformulate the concept of Lebesgue measures and Jacobians for finite dimensional objects (such as subspaces or manifolds) and maps between those objects respectively.  Some needed material about  Lebesgue measures in Banach spaces is
 given in Appendix A.  In section 2 we introduce the set-up and the main result. Section 3 is devoted to the proof of the result.

 \section{Settings and Main Results}  \label{S:Setting}
 Let $(\mb X,|\cdot|)$ be a separable Banach space, $f:\mb X \to \mb X$ be a $C^2$ map. Let $Df_x$ be the Fr\'{e}chet derivative of $f$ at point $x\in \mb X$.
The conditions below are assumed throughout:
 \begin{itemize}
 \item[C1)] $f$ is injective;
 \item[C2)] There exist an $f$-invariant compact set $\Lambda$, on which $Df_x$ is (i) injective,\\
  and (ii) for all $x\in \Lambda$ $$\kappa(x):=\limsup_{n\to\infty}\frac1n\log\|Df^n(x)\|_{\kappa}<0,$$ where $\|\cdot\|_{\k}$ is the Kuratowski measure of noncompactness of an operator;
 \item[C3)] $\Lambda$ is an attractor with basin $U$, i.e., $U$ is an open neighborhood of $\Lambda$ and $$\cap_{n\ge0}f^n(U)=\Lambda.$$
 \end{itemize}

 Recall that, for a linear operator $T$, $\|T\|_\k$ is defined to be the infimum of the set of numbers $r>0$ where $T(B)$, $B$ being the unit ball, can be covered by a finite number of balls of radius $r$. Since $\|T_2\circ T_1\|_\k\le\|T_2\|_\k\|T_1\|_\k$ and $\|Df\|_{\k}\le \|Df\|$ is uniformly bounded on $\L$, the limit in the definition of $\k(x)$ exists for any $x\in \L$ and is a measurable function. Also note that, by definition, $\|T\|_{\k}=-\infty$ if $T$ is compact.

 \begin{remark}\label{R:WeakerCondition}
 Note that some well-known results follow from the assumptions above immediately:
 \begin{itemize}
 \item[(i)] By the compactness of $\L$, there is always an $f$-invariant probability measure supported on $\L$,  which we denote by $\mu$.
  \item[(ii)] By applying the Multiplicative Ergodic Theorem, there is a full measure set $\L'\subset \L$, on which the notion of Lyapunov exponents can be introduced, we refer the reader to \cite{M}, \cite{T} and \cite{LL} for details.
\item[(iii)] Condition  C2) implies that, for all $x\in\L'$, there are at most finitely many non-negative Lyapunov exponents.
\item[(iv)] Attractors are important because they capture the asymptotic behavior of large
sets of orbits. In general, $\Lambda$ itself tends to be relatively small (compact and of finite
Hausdorff dimension) while its attraction basin, which by definition contains an open set, is
quite visible in the phase space. Notice that our attractors are not necessarily
global attractors in the sense of \cite{Hale} and \cite{Te}.
 \end{itemize}
 \end{remark}


 For given $x\in \Lambda$, we define the unstable set of $x$ as the following:
 \begin{align*}
  W^u(x)&=\{y\in \mb X | f^{-n}(y) \text{ exists }, \forall n\in \mb N,  |f^{-n}(y)-f^{-n}(x)|\to 0  \text{ exponentially fast as } n\to +\infty\}.
 \end{align*}
 By C3), we have $ W^u(x) \subset \Lambda$ for all $x\in\Lambda$.

 In general, 
 $W^u(x)$ is an immersed manifold rather than an embedded one. To avoid these disadvantages, one can study the local invariant manifolds defined below:
 \begin{align*}
 & W^{u}_{r(x)}(x)=\{y\in B(x,r(x))|\  f^{-n}(y) \text{ exists for all } n\in \mb N, |f^{-n}(y)-f^{-n}(x)|e^{-n\l(x)}\le C(x), n\ge 0\},
 \end{align*}
   where $B(x,r)$ is the $r$-ball centered at $x$, $r$ and $C$ are positive tempered functions and $\l$ is an $f$-invariant function. In particular, given an $f$-invariant measure $\mu$ with finite many positive Lyapunov exponents, there are measurable tempered functions $r,C: \L\to \mb R^+$ such that for $\mu$-a.e. $x\in \L$, $ W^{u}_{r(x)}(x)$ is an embedded finite dimensional disc with well controlled distortions.\\

   In the current setting, one quick observation is that, for $\mu$-a.e. $x$
   $$ W^u(x)=\bigcup_{n=0}^{+\infty}f^n_{f^{-n}(x)}( W^{u}_{r(f^{-n}(x))}(f^{-n}(x))),$$
    which implies that $ W^u(x)$ is an immersed manifold, since $ W^u_{r(\cdot)}(\cdot )$ is finitely dimensional and $f$ is injective and differentiable.

    By the definition of $ W^u(x)$, it is obvious that 
    $ W^u(x)\cap  W^u(y)\neq \emptyset$ if and only if $ W^u(x)= W^u(y)$. So, up to a $\mu$-null set, $\bigcup_{x\in \L} W^u(x)$ form a partition of $\Lambda$. Unfortunately, this partition may be not measurable. We need to introduce the following concepts:

 \begin{definition}\label{D:Subordinate}
 Let $\mu$ be an $f$-invariant  Borel probability measure on $\Lambda$. A measurable partition ${\mathcal P}$
of $\Lambda$ is said to be {\it subordinate to the unstable manifolds} with respect to $\mu$ if, for $\mu$-a.e. $x \in \Lambda$, one has that ${\mathcal P}(x) \subset W^u(x)$ (here ${\mathcal P}(x)$ denotes the element of ${\mathcal P}$ which contains $x$) and it contains an open neighborhood of $x$ in $ W^u(x)$ (endowed with the submanifold topology).
\end{definition}

\begin{definition}\label{D:SRB}
 An $f$-invariant  Borel  probability measure $\mu$ on $\Lambda$ is called an {\bf SRB measure} if for every measurable partition ${\mathcal P}$
of $\Lambda$  subordinate to the unstable manifolds with respect to $\mu$ one has
$$
\mu_x^{\mathcal P} \ll {\rm Leb}_x
$$
for $\mu$-a.e.  $x \in \Lambda$, where $\mu_x^{\mathcal P}$ denotes the conditional measure of $\mu$ on ${\mathcal P}(x)$ and Leb$_x$ denotes a Lebesgue measure on $W^u(x)$ induced by norm of $\mb X$.
\end{definition}

We call  $f|_{\Lambda}$  to be  {\bf partially hyperbolic} if the following holds:
for every $x \in \Lambda$ there is a splitting
\[
\mb X=E_x^u \oplus E_x^{cs}
\]
which depends continuously on $x \in \Lambda$  with $\dim E_x^u >0$ and satisfies that for every $x \in \Lambda$
\[
Df_{x} E_x^u=E_{fx}^u, \ \ \ Df_{x} E_x^{cs} \subset E_{fx}^{cs}
\]
and
\begin{equation}\label{E:PartialHyperbolic}
\left\{\begin{array}{ll}
|Df_{x} \xi| \geq e^{\lambda_0} |\xi|, \ \ \ &\forall\ \xi \in  E_x^u, \\
|Df_{x}\eta| \leq  |\eta|, \ \ \ &\forall\ \eta \in  E_x^{cs},
\end{array}
\right.
\end{equation}
where $\lambda_0>0$ is a constant.


The following is the main result we derived in this paper.

    \begin{theorem}\label{T:SRBExist}
    If $f|_\L$ is partially hyperbolic, then there exists at least one SRB measure of $f$ with support in $\L$.
    \end{theorem}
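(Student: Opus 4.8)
\emph{Strategy.} Following the ideas of \cite{PS} and \cite{Young3}, the plan is to run the classical Sinai--Ruelle--Bowen construction — push forward a Lebesgue measure on a piece of unstable manifold, average in time, and pass to a weak$^*$ limit — while replacing those parts of the finite-dimensional argument that rely on continuity of densities by a Rohlin-type analysis of conditional measures. Since the splitting $\mb X=E^u_x\oplus E^{cs}_x$ is uniformly dominated ($e^{\l_0}>1$, while the operator norm of $Df_x|_{E^{cs}_x}$ is at most $1$, and $Df_x$ is injective on $\L$) and $\dim E^u_x$ is a continuous, hence locally constant, integer-valued function $u$, the uniform unstable-manifold theorem under C1)--C2) gives at each $x\in\L$ an embedded $u$-disc $W^u_{\mathrm{loc}}(x)\subset W^u(x)\subset\L$ tangent to $E^u_x$, of uniform size and uniformly bounded geometry. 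Fix one such $x_0$ and an embedded $u$-disc $D_0\subset W^u_{\mathrm{loc}}(x_0)$, let $\ell_0$ be the Lebesgue measure on $D_0$ induced by $|\cdot|$ normalized to $\ell_0(D_0)=1$, and set
$$m_n:=\frac1n\sum_{k=0}^{n-1}f^k_*\ell_0,\qquad n\ge1.$$
Because $D_0\subset\L$ and $\L$ is compact and $f$-invariant, each $m_n$ is a Borel probability on $\L$; weak$^*$ compactness gives a subsequence $m_{n_j}\to\mu$, and the Krylov--Bogolyubov argument makes $\mu$ an $f$-invariant measure with support in $\L$. It remains to show that $\mu$ is SRB in the sense of Definition~\ref{D:SRB}.

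\emph{Uniform geometry and bounded distortion.} Using uniform expansion on $E^u$, the domination, and the $C^2$ smoothness of $f$ (so $Df$ is Lipschitz on a neighborhood of the compact set $\L$), a graph-transform argument shows that the iterates $D_k:=f^k(D_0)$ are locally graphs over the $E^u$-directions with $C^1$ norms bounded uniformly in $k$; in particular their plaques have uniformly controlled curvature and angle with $E^u$. Letting $J_k(y)$ be the Jacobian of $f^k|_{D_0}$ at $y$ relative to the chosen Lebesgue measures on $D_0$ and $D_k$ (pointwise, in the sense of Appendix~A), the logarithm $\log J_k$ telescopes along the orbit and uniform expansion renders the intermediate distances summable, which yields a bounded-distortion estimate $\big|\log J_k(y)-\log J_k(y')\big|\le C\,\mathrm{dist}_{D_k}(f^k(y),f^k(y'))$ with $C$ independent of $k$ for $y,y'$ in a single plaque. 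Hence, read in plaque coordinates on $D_k$, the measure $f^k_*\ell_0$ has density with respect to $\mathrm{Leb}$ bounded above and below by uniform positive constants on each plaque.

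\emph{Passing to the limit.} Cover a neighborhood of $\L$ by finitely many ``$u$-flow boxes'', i.e.\ foliated charts whose plaques are $u$-discs nearly parallel to $E^u$. The uniform geometry of the $D_k$ makes each $m_n$, restricted to such a box, an average of normalized Lebesgue measures on $u$-plaques with densities uniformly comparable to $1$, and uniform expansion prevents this mass from collapsing onto lower-dimensional sets; taking the weak$^*$ limit inside a box and disintegrating over its plaque foliation shows that the conditionals of $\mu$ along $u$-plaques are absolutely continuous with respect to $\mathrm{Leb}$, with essentially bounded densities arising as limits of the $J_k$-densities. Given a measurable partition $\mc P$ subordinate to the unstable manifolds with respect to $\mu$, Rohlin's theory \cite{Rok} of conditional measures along an increasing sequence of partitions refining both $\mc P$ and the box foliations then identifies these plaque-conditionals with $\mu^{\mc P}_x$, so $\mu^{\mc P}_x\ll\mathrm{Leb}_x$ for $\mu$-a.e.\ $x$, which is the SRB property.

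\emph{Main obstacle.} The crux — and the reason the Hilbert-space argument of \cite{LLL} does not transfer — is the last two steps: in a Banach space there is no canonical system of Lebesgue measures, the $J_k$ are only pointwise defined with limited regularity, and the density of $\mu$ need not be continuous, so one cannot read off absolute continuity from weak$^*$ convergence of pushed-forward Lebesgue measures on unstable leaves as in the classical proof. One must instead work throughout with Rohlin's measurable-partition formalism, fix once and for all a coherent family of Lebesgue measures on finite-dimensional subspaces and submanifolds together with the induced Jacobians (Appendix~A), verify that the absolute-continuity conclusion is independent of these choices within the controlled class, and — the genuine technical difficulty — show that disintegration commutes with the weak$^*$ limit, for which the uniform density and distortion bounds above, together with a careful choice of the refining partitions, are exactly what is needed.
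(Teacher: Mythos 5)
Your overall route coincides with the paper's: push forward a normalized Lebesgue measure on one local unstable disc, take Ces\`aro averages and a weak$^*$ limit, prove a bounded-distortion estimate (this is the paper's Lemma \ref{L:DetUniform}, proved there with the determinant calculus of Appendix \ref{S:AppLeb}), cover $\L$ by foliated boxes with $\mu$-null boundary, and obtain uniform two-sided bounds on the densities of the conditionals of the pushed-forward measures on the plaques (the paper's bound (\ref{E:HUniformBD})). Up to that point your outline is faithful to the actual proof.

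The gap is exactly the step you yourself flag as the ``main obstacle'': you assert that ``taking the weak$^*$ limit inside a box and disintegrating over its plaque foliation shows that the conditionals of $\mu$ along $u$-plaques are absolutely continuous,'' i.e.\ that disintegration commutes with the weak$^*$ limit, and you only gesture that the uniform bounds ``are exactly what is needed.'' But weak$^*$ convergence does not pass to conditional measures, and this is precisely where the Hilbert-space argument of \cite{LLL} fails in a Banach space; the paper never proves (nor uses) any such commutation. What it does instead is: (i) for cylindrical sets $A_{S,F}$ whose base $S$ has $\mu$-null boundary, it combines the uniform density bounds with the portmanteau-type Lemma \ref{L:ConvergenceInMeasure} (applied to compact/open approximations $K\subset F\subset U$) to prove the two-sided estimate $\frac1C\,\mu_{\eta_x}(F)\,\nu(S)\le\mu(A_{S,F})\le C\,\mu_{\eta_x}(F)\,\nu(S)$ of (\ref{E:AbsCylinSet}) directly for the limit measure $\mu$, with no disintegration of the approximating measures involved; (ii) it then transfers these bounds to the conditionals $(\mu|_V)_y$ by fixing countable generating algebras $\xi_d$ (fibre direction) and $\zeta^0_{\Sigma,d}$ (base direction, built from the partitions $\zeta_n$ with null-boundary elements) and running Carath\'eodory outer-measure approximations, which yields $\frac1C\lambda^u_y\le(\mu|_V)_y\le C\lambda^u_y$ on Borel sets for $\nu$-a.e.\ $y$, hence (\ref{E:(1.0)}). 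Without this argument, or a substitute of comparable precision, your ``Passing to the limit'' paragraph states the conclusion rather than proving it, so the proposal does not close the theorem. A minor further omission: one must also discard the sets $L_n$ of points whose $n$-th images cross a plaque only partially, showing $\lambda_L(L_n)\to0$ exponentially via parts (3)--(4) of Lemma \ref{L:UnstableManifold}; your outline does not address these boundary overhangs.
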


\section{Proof of Theorem \ref{T:SRBExist}.} \label{S:ExistSRB}

In this section, we assume that $f|_\L$ is partially hyperbolic and prove Theorem  \ref{T:SRBExist}.
\subsection{ Unstable manifolds for partially hyperbolic systems}\label{S:SUManifoldUPH}
In this section, we state  a version of local unstable manifolds theorem for partially hyperbolic systems.


\begin{lemma}\label{L:UnstableManifold}
  There exists a continuous family of $C^2$ embedded $k$-dimensional discs $\{W_\delta^u(x)\}_{x \in \Lambda}$ such that the following hold true for each $x \in \Lambda$:
\begin{itemize}
\item[(1)] $W_\delta^u(x)=\exp_x\left({\rm Graph} (h_x)\right)$ where
$$
h_x:  E_x^u (\delta)  \to E_x^{cs}
$$
is a $C^{2}$ map with $h_x(0)=0$, $Dh_x(0)=0$, $\|Dh_x\| \leq \frac{1}{3}$, $\|D^2h_x\|$ being uniformly bounded on $x$ and $E_x^u (\delta)=   \{ \xi \in E_x^u: |\xi|<\delta\}$;

\item[(2)] $f W_\delta^u(x) \supset W_\delta^u(f(x))$ and $W^u(x)=\bigcup_{n \geq 1} f^n W_\delta^u(x_{-n})$
where $x_{-n}$ is the unique point in $\Lambda$ such that $f^nx_{-n}=x$;

\item[(3)] $d^u(y_{-n}, z_{-n}) \leq \gamma_0 e^{-n(\lambda_0-\varepsilon_0)} d^u (y, z)$ for any $y,\ z \in  W_\delta^u(x)$, where $d^u$ denotes the distance along the unstable discs, $y_{-k}$ is the unique point in $\Lambda$ such that $f^ky_{-k}=y$, $z_{-k}$ is defined similarly and $\gamma_0>0$, $0<\varepsilon_0 <<\lambda_0$ are some constants;

\item[(4)] there is $0<\rho <\delta$ such that, if  $W_\rho^u(x):=Exp_x\left({\rm Graph} (h_x|_{E_x^u (\rho)})\right)$ intersects $W_\rho^u(\bar{x})$ for $\bar{x} \in \Lambda$, then
$W_\rho^u(x) \subset W_\delta^u(\bar{x})$, where $Exp_x$ is the affine map from the tangential fibre attached on $x$ to the phase space, which can be simply identified by the operation of adding $x$.
\end{itemize}
\end{lemma}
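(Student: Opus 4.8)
The plan is to construct the discs $W^u_\delta(x)$ by a Hadamard--Perron graph transform adapted to the partially hyperbolic, Banach setting. Since $f$ is injective on the compact invariant set $\Lambda$, the map $f|_\Lambda$ is a homeomorphism of $\Lambda$, so every $x$ has a unique backward orbit $(x_{-n})_{n\ge0}\subset\Lambda$ and $x\mapsto x_{-1}$ is continuous. Because the splitting $\mb X=E^u_x\oplus E^{cs}_x$ is continuous over the compact $\Lambda$, the associated linear projections $\pi^u_x,\pi^{cs}_x$ are uniformly bounded and $k:=\dim E^u_x<\infty$. Identify a neighbourhood of $x$ in $\mb X$ with a neighbourhood of the origin via $Exp_x:\xi\mapsto x+\xi$, and read $f$ in these charts as
$$
F_x:=Exp_{fx}^{-1}\circ f\circ Exp_x,\qquad F_x(\xi,\eta)=\bigl(A_x\xi+g_x(\xi,\eta),\;B_x\eta+b_x(\xi,\eta)\bigr),
$$
with $A_x=Df_x|_{E^u_x}:E^u_x\to E^u_{fx}$ invertible, $\|A_x^{-1}\|\le e^{-\lambda_0}$ and $|A_x\xi|\ge e^{\lambda_0}|\xi|$; with $B_x=\pi^{cs}_{fx}Df_x|_{E^{cs}_x}$, $\|B_x\|\le1$; and with $g_x,b_x$ vanishing to first order at $0$. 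By the $C^2$ hypothesis on $f$, compactness of $\Lambda$ and continuity of the splitting, $g_x,b_x$ have uniformly bounded $C^2$ norms and depend continuously on $x$, and given any $\varepsilon>0$ there is $\delta>0$ with $\|Dg_x\|,\|Db_x\|<\varepsilon$ on the $\delta$-ball, uniformly in $x$.

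Fix such a small $\delta$ and let $\mathcal S$ be the complete metric space of continuous families $\psi=\{\psi_x\}$ of Lipschitz maps $\psi_x:E^u_x(\delta)\to E^{cs}_x$ with $\psi_x(0)=0$ and $\mathrm{Lip}(\psi_x)\le\tfrac13$, under the scale-invariant distance $\|\psi-\phi\|_*:=\sup_x\sup_{\xi\ne0}|\psi_x(\xi)-\phi_x(\xi)|/|\xi|$. Define the graph transform $\Gamma$ so that $\mathrm{Graph}\bigl((\Gamma\psi)_x\bigr)$ is the connected component containing $0$ of $F_x\bigl(\mathrm{Graph}(\psi_{x_{-1}})\bigr)\cap\bigl(E^u_x(\delta)\times E^{cs}_x\bigr)$: the map $u(\xi):=A_x\xi+g_x(\xi,\psi_{x_{-1}}(\xi))$ is injective and, using $|A_x\xi|\ge e^{\lambda_0}|\xi|$, smallness of $g_x$ and finite-dimensionality of $E^u$, its image over $E^u_{x_{-1}}(\delta)$ contains $E^u_x(\delta)$, so one may set $(\Gamma\psi)_x(u):=B_x\psi_{x_{-1}}(\xi(u))+b_x(\xi(u),\psi_{x_{-1}}(\xi(u)))$ with $\xi=u^{-1}$. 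A routine cone estimate --- a chord direction of $\mathrm{Graph}(\psi_{x_{-1}})$ lies in a $\tfrac13$-cone about $E^u$, and its $DF_x$-image lies in a strictly narrower cone since $E^u$ is expanded by $\ge e^{\lambda_0}$, $E^{cs}$ is not expanded, and the errors are $<\varepsilon$ --- shows $\mathrm{Lip}((\Gamma\psi)_x)\le\tfrac13$ and $(\Gamma\psi)_x(0)=0$, and $\Gamma\psi$ is again continuous in $x$; thus $\Gamma:\mathcal S\to\mathcal S$. Moreover $\Gamma$ contracts $\|\cdot\|_*$ with factor $e^{-\lambda_0}+O(\varepsilon)<1$, the leading term coming from $\|B_x\|\,\|A_x^{-1}\|\le e^{-\lambda_0}$. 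Let $h=\{h_x\}$ be the unique fixed point. The standard fibre-contraction argument --- applied first to the operator induced by $\Gamma$ on pairs (section, candidate tangent field), then once more to handle second derivatives, which is where the uniform $C^2$ bound on $f$ is used --- upgrades $h$ to a $C^2$ family with $Dh_x(0)=0$, $\|Dh_x\|\le\tfrac13$ and $\|D^2h_x\|$ uniformly bounded; setting $W^u_\delta(x):=Exp_x(\mathrm{Graph}(h_x))$ gives (1).

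The relation $fW^u_\delta(x_{-1})\supset W^u_\delta(x)$ is the fixed-point identity in chart coordinates, so the sets $f^nW^u_\delta(x_{-n})$ increase with $n$. Property (3) holds because the tangent spaces of $W^u_\delta(x)$ lie in $\tfrac13$-cones about the $E^u$-directions, on which $Df^{-1}$ contracts by $\le e^{-(\lambda_0-\varepsilon_0)}$ (from $|Df_x\xi|\ge e^{\lambda_0}|\xi|$ on $E^u_x$ and smallness of the off-diagonal terms), while the uniform $C^2$ bound gives bounded distortion of $Df^{-n}$ along the disc; integrating along a path in $W^u_\delta(x)$ joining $y$ to $z$ and absorbing the finitely many first-step constants into $\gamma_0$ yields $d^u(y_{-n},z_{-n})\le\gamma_0e^{-n(\lambda_0-\varepsilon_0)}d^u(y,z)$. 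For the remaining part of (2): a point of $f^nW^u_\delta(x_{-n})$ has a full backward orbit running through the tubes $W^u_\delta(x_{-m})\subset B(x_{-m},\delta)\subset\Lambda$, which by (3) converges exponentially to the backward orbit of $x$, hence lies in $W^u(x)$; conversely, if $y\in W^u(x)$ then $|f^{-n}y-f^{-n}x|\to0$ exponentially, so for large $n$ the point $y_{-n}$ lies in a tiny ball about $x_{-n}$ and, being backward-asymptotic to it, must lie on $W^u_\delta(x_{-n})$ by the local uniqueness used next, whence $y\in f^nW^u_\delta(x_{-n})$. Finally, for (4): if $p\in W^u_\rho(x)\cap W^u_\rho(\bar x)$, then by (3) the backward orbits of $x$ and $\bar x$ are each exponentially asymptotic to that of $p$, hence to one another. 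So for every $y\in W^u_\rho(x)$ one gets $|y_{-n}-\bar x_{-n}|\le(\mathrm{const}\cdot\rho)\,e^{-n(\lambda_0-\varepsilon_0)}<\delta$ for all $n\ge0$, while $\pi^u_{\bar x}(y-\bar x)\in E^u_{\bar x}(\delta)$ once $\rho$ is small. A repetition of the graph-transform uniqueness --- a backward orbit confined to the $\delta$-tube about $(\bar x_{-n})$ must have its $E^{cs}_{\bar x_{-n}}$-coordinates equal to $h_{\bar x_{-n}}$ of its $E^u$-coordinates --- then gives $y\in W^u_\delta(\bar x)$, i.e. $W^u_\rho(x)\subset W^u_\delta(\bar x)$. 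Choosing $\rho\ll\delta$, and using continuity of the splitting so that $E^u_x,E^u_{\bar x}$ are close when $x,\bar x$ are, makes all the smallness requirements compatible.

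The principal obstacle is not the (by now standard) hyperbolic estimates but carrying the whole construction out in a genuine Banach space. There are no orthogonal projections, so one must work throughout with the only boundedly controlled topological projections $\pi^u_x,\pi^{cs}_x$ coming from the continuous splitting, and one must verify that the chart representations $F_x$ carry uniform $C^2$ bounds and continuous $x$-dependence. Relatedly, promoting the a priori merely continuous fixed point $h$ to a $C^2$ family with a uniform bound on $\|D^2h_x\|$ requires the iterated (first- and second-order) graph transforms and the fibre contraction theorem rather than a one-line smoothness appeal; and the local uniqueness invoked in (2) and (4) must be proved from scratch in terms of backward dynamics, there being no ambient Riemannian structure to lean on.
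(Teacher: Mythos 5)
Your overall route --- chart the map along backward orbits, run a graph transform on Lipschitz sections, upgrade to $C^2$ by fibre contraction, then deduce (2)--(4) from cone estimates and backward-orbit arguments --- is the standard one; the paper itself gives no proof, citing Section 9 of \cite{LL} for existence and Lemma 3.1 of \cite{LLL} for the remaining items, so a self-contained construction like yours is a legitimate substitute. Your choice of the scaled distance $\sup_{\xi\neq0}|\psi_x(\xi)-\phi_x(\xi)|/|\xi|$ is exactly what makes the transform contract (leading factor $\|B_x\|\,\|A_x^{-1}\|\le e^{-\lambda_0}$) even though $\|Df|_{E^{cs}}\|$ is only $\le 1$, and the automatic bunching $\|B\|\,m(A)^{-2}<1$ does give the uniform $C^2$ bounds.

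The genuine gap is the ``local uniqueness'' you invoke for the second half of (2) and for (4): as you state it, it is false in the partially hyperbolic setting. A backward orbit merely \emph{confined} to the $\delta$-tube around $(\bar x_{-n})$ need not have its $E^{cs}$-coordinates given by $h_{\bar x_{-n}}$, because $E^{cs}$ is only non-expanding and may contain neutral directions: in a product of a hyperbolic attractor with an isometry (or the identity) in the centre factor, a point displaced purely in the centre direction has its whole backward orbit confined to the tube yet never meets $W^u_\delta(\bar x)$. The correct statement must assume backward \emph{convergence} to the orbit of $\bar x$, and proving it is the real deferred work: one has to show the $E^{cs}$-component of the deviation cannot decay backward, since the linear part does not contract it and its decay can only be produced by the coupling terms, whose size is controlled by $Dg(0)=Db(0)=0$. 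Moreover, in the application to (2) the definition of $W^u(x)$ supplies only an \emph{arbitrary} exponential rate $\beta>0$, possibly much smaller than the nonlinearity scale fixed by $\delta$, so one must first pass far enough down the backward orbit that the actual deviations (hence the effective Lipschitz constants of the error terms) are small compared with $\beta$ before the comparison closes; in (4) the issue is milder because you have already produced convergence at the fixed fast rate $\lambda_0-\varepsilon_0$. With the uniqueness lemma restated in this convergence form and proved along these lines, the rest of your sketch goes through (modulo the harmless index slips $F_x$, $A_x$, $g_x$ in place of $F_{x_{-1}}$, $A_{x_{-1}}$, $g_{x_{-1}}$ in the definition of the transform, and the stray claim $B(x_{-m},\delta)\subset\Lambda$, which is false but not needed).
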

The existence of the local unstable manifold is following from the result of Section 9 in \cite{LL}; and to prove the other parts, since one do not need to use inner product in particular, so the same proof of Lemma 3.1 in \cite{LLL} works here, so we omit the proof of this lemma.

\medskip

\subsection{Proof of Theorem \ref{T:SRBExist}.}\label{SS:SRBExist}
 We construct an SRB measure $\mu$ by taking a weak* limit of the average of a pushed forward Lebesgue measure on a local unstable manifold. \\

 First, since Lebesgue measure defined for a normed space  depends  on the choice of bases, we set up a system of piecewisely continuous unit bases of $E^u$  and fix such a system in the rest argument.

 Since the splitting $\mb X=E_x^u\oplus E_x^{cs}$ varies continuously in $x$, for any small $\e\in(0,1)$ and $x\in \Lambda$, there exists $\d>0$ such that one can choose a unit basis $\eta_y=\{v_i(y)\}_{1\le i\le \dim E^u}$ of $E^u_y$ for any $y\in B(x,\d)\cap \Lambda$ with
\begin{equation}\label{E:Basis1}
\di(v_i(y),\spa\{v_j(y)\}_{1\le j\le i-1})>1-\e,\ 2\le i\le \dim E^u
\end{equation}
 and,  for each $1\le i\le \dim E^u$, $v_i(y)$ being  continuous in $y$. By Lemma 4.1 of \cite{LL}, we have that
 \begin{equation}\label{E:Basis2}
\di(v_i(y),\spa\{v_j(y)\}_{j\neq i})\ge\left(\frac{1-\e}{2-\e}\right)^{\dim E^u-1}(1-\e).
\end{equation}
Noting that $\Lambda$ is compact, there exist finite points $\{x_i\}_{i\in I}$ where $I$ is a finite index set such that $\Lambda\subset \cup_{i\in I} B(x_i,\d)$. These balls generate a finite partition by taking intersections. By choosing basis properly, we have constructed piecewisely continuous bases of the unstable linear fibres $\{E^u_x\}_{x\in \Lambda}$.

  Let $J^u(x)= |\det_{\eta_{x},\eta_{fx}} (Df_x|_{E^u_x})|$ for $x \in \Lambda$ , which is defined by (\ref{D:determinantApp}) in Section \ref{S:AppLeb}.


\begin{lemma}\label{L:DetUniform}
There is a constant $C>0$ such that for any $x\in\Lambda$, $y,\ z \in W_\delta^u(x)$ and $n \geq 1$
$$
\frac{1}{C} \leq \prod\limits_{k=1}^n
\frac{J^u(y_{-k})}{J^u(z_{-k})} \leq C,
$$
where $y_{-k}$ is the unique point in $\Lambda$ such that $f^ky_{-k}=y$ and $z_{-k}$ is defined similarly.
\end{lemma}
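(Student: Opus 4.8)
The plan is to establish the bounded-distortion estimate by a standard telescoping-and-summing argument, with the key input being that $\log J^u$ is Hölder (or at least Dini) continuous along unstable discs and that backward iterates on $W_\delta^u(x)$ contract exponentially. First I would fix $x \in \Lambda$ and $y, z \in W_\delta^u(x)$; by Lemma \ref{L:UnstableManifold}(2)–(3) the backward preimages $y_{-k}, z_{-k}$ are well defined, lie on $W_\delta^u(x_{-k})$, and satisfy $d^u(y_{-k}, z_{-k}) \le \gamma_0 e^{-k(\lambda_0 - \varepsilon_0)} d^u(y,z) \le \gamma_0 e^{-k(\lambda_0-\varepsilon_0)} \cdot 2\delta$. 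Taking logarithms, it suffices to bound
$$
\left| \sum_{k=1}^n \big( \log J^u(y_{-k}) - \log J^u(z_{-k}) \big) \right| \le \sum_{k=1}^n \big| \log J^u(y_{-k}) - \log J^u(z_{-k}) \big|
$$
uniformly in $n$, which reduces everything to a modulus-of-continuity estimate for $\log J^u$ along unstable discs.

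The second step is to prove that $J^u$ is bounded away from $0$ and $\infty$ on $\Lambda$ and that $x \mapsto \log J^u(x)$ admits a uniform Hölder modulus of continuity when restricted to each unstable disc. Boundedness follows from partial hyperbolicity: $|Df_x \xi| \ge e^{\lambda_0}|\xi|$ on $E_x^u$ gives $J^u(x) \ge e^{k\lambda_0}$ where $k = \dim E^u$, while $J^u(x) \le \|Df\|^k$ which is uniformly bounded on $\Lambda$ since $\Lambda$ is compact and $f$ is $C^2$. For the Hölder estimate along $W_\delta^u(\cdot)$ I would use that $f$ is $C^2$, that the discs $W_\delta^u(x) = \exp_x(\mathrm{Graph}(h_x))$ form a continuous family with $\|D^2 h_x\|$ uniformly bounded, and that the chosen unit bases $\eta_y = \{v_i(y)\}$ vary continuously with the uniform transversality bound \eqref{E:Basis2}; the latter guarantees, via the determinant formula \eqref{D:determinantApp}, that $(Df_x|_{E_x^u})$ expressed in these bases has determinant bounded below in absolute value, so that $\log|\det|$ is a Lipschitz function of the matrix entries on the relevant compact set of matrices. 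The matrix entries in turn depend Hölder-continuously on the base point along $W_\delta^u$ because both $x \mapsto Df_x$ (being $C^1$) and $y \mapsto E_y^u$ (tangent to the $C^2$ disc) do. Combining, there are constants $C_1 > 0$ and $\alpha \in (0,1]$ with $|\log J^u(p) - \log J^u(q)| \le C_1 \, d^u(p,q)^\alpha$ for all $p, q$ on a common $W_\delta^u(w)$, $w \in \Lambda$.

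With these two ingredients the conclusion is immediate: applying the Hölder bound on $W_\delta^u(x_{-k})$ and then the contraction estimate,
$$
\sum_{k=1}^n \big| \log J^u(y_{-k}) - \log J^u(z_{-k}) \big| \le C_1 \sum_{k=1}^n d^u(y_{-k}, z_{-k})^\alpha \le C_1 (2\delta\gamma_0)^\alpha \sum_{k=1}^\infty e^{-k\alpha(\lambda_0 - \varepsilon_0)} =: \log C,
$$
which is finite since $\lambda_0 - \varepsilon_0 > 0$, and independent of $x$, $y$, $z$ and $n$. Exponentiating gives the two-sided bound with this $C$.

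The main obstacle is the Hölder-continuity of $\log J^u$ along unstable discs, specifically controlling how the unstable subspace $E_y^u$ and the reference basis $\eta_y$ vary as $y$ moves along $W_\delta^u(w)$, since in a Banach space the determinant $\det_{\eta_y,\eta_{fy}}$ is genuinely basis-dependent and only piecewise continuous globally. The point to check carefully is that within a single disc $W_\delta^u(w)$ one stays inside one continuity piece of the chosen basis system (or that $\delta$ and the partition from the cover $\{B(x_i,\delta)\}_{i \in I}$ can be taken fine enough that this holds), so that the piecewise-continuous construction does not destroy the modulus of continuity; granting that, the estimate is routine because all the relevant objects are $C^1$ or better and live over the compact set $\Lambda$. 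This is exactly where the uniform transversality bound \eqref{E:Basis2} and the uniform bound on $\|D^2 h_x\|$ from Lemma \ref{L:UnstableManifold} are used, and the argument parallels the corresponding step of Lemma 3.1 in \cite{LLL} adapted to the Banach determinant of Appendix A.
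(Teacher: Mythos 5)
There is a genuine gap, and it sits exactly at the point you flagged as ``the main obstacle'' and then waved through. Your argument reduces the lemma to a term-by-term bound $\sum_{k\ge1}\bigl|\log J^u(y_{-k})-\log J^u(z_{-k})\bigr|<\infty$, which requires a uniform H\"older (or at least Dini) modulus of continuity for $\log J^u$ along unstable discs. But in this setting no such modulus is available: the splitting $x\mapsto E^u_x\oplus E^{cs}_x$ is only assumed \emph{continuous}, and the unit-basis system $\eta_y$ used to define $J^u(y)=|\det_{\eta_y,\eta_{fy}}(Df_y|_{E^u_y})|$ is constructed in Section \ref{SS:SRBExist} to be merely \emph{piecewise continuous}, with no quantitative modulus, only the transversality bound \eqref{E:Basis2}. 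Your assertion that ``the matrix entries depend H\"older-continuously on the base point because both $Df$ and $E^u$ do'' is therefore unjustified: along a disc $T_yW^u_\delta(x)$ is indeed Lipschitz (bounded $\|D^2h_x\|$), but $E^{cs}_y$, the projection $\pi^u_y$, and above all the bases $\eta_{y}$ and $\eta_{fy}$ are only continuous, so the matrix of $Df_y|_{E^u_y}$ in these bases is only (piecewise) continuous in $y$. Even restricting to one continuity piece, a bare modulus of continuity $\omega$ gives $\sum_k\omega(Ce^{-k(\lambda_0-\varepsilon_0)})$, which can diverge; and at the (uncontrollably many) steps $k$ where $y_{-k}$ and $z_{-k}$ fall in different pieces of the partition generated by the cover $\{B(x_i,\delta)\}$, the individual terms need not even be small. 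So the quantity you propose to bound can genuinely fail to be summable for the Jacobian as defined, even though the product in the lemma stays bounded.

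The reason the lemma is nevertheless true is a cancellation your proposal never uses: by multiplicativity \eqref{E:DetProdForm}, in the product $\prod_{k=1}^nJ^u(y_{-k})$ all the basis-dependent factors at intermediate points telescope, leaving only endpoint contributions that are uniformly bounded via \eqref{E:Basis2} and \eqref{E:determinantApp}. This is how the paper proceeds: it introduces the graph-identification operators $P^y_k$ and the transported operators $Q^y_k:E^u_{x_{-k-1}}\to E^u_{x_{-k}}$ along the fixed backward orbit of $x$, shows $\prod_kJ^u(y_{-k})$ is comparable, with a uniform constant, to $\det_{\eta_{x_{-n}},\eta_{x}}\bigl(\prod_kQ^y_k\bigr)$, and then compares $Q^y_k$ with $Q^z_k$ --- operators between the \emph{same} pair of fibres expressed in the \emph{same} bases --- using the Lipschitz estimate \eqref{E:DetLipApp} of Lemma \ref{L:determinantApp} together with the exponential contraction from Lemma \ref{L:UnstableManifold}(3). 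There the only regularity needed is Lipschitz dependence of $\det$ on the operator plus $|y_{-k}-x_{-k}|,|z_{-k}-y_{-k}|\lesssim e^{-k(\lambda_0-\varepsilon_0)}$, with no modulus of continuity of the bases ever invoked. In short, circumventing the basis-regularity problem is the entire content of this lemma in the Banach setting (it is precisely why the Hilbert-space argument of \cite{LLL} does not transfer), and your proof defers it rather than solves it.
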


\begin{proof}
The proof is based on the Lipchitz continuity of $Df$ and of the subspaces $E^u$ restricted on unstable manifolds. Note that, since $\Lambda$ is compact, $f$ is $C^2$, and the splitting $E^u\oplus E^{cs}$ is continuous, $\|Df\|,\|\pi^u\|,\|\pi^{cs}\|$ are uniformly bounded. For sake of convenience,  we assume $0<\d\le 1$. By applying Lemma \ref{L:UnstableManifold}, we have that there exists $M\ge 1$ such that
$$\max\left\{\sup_{x\in\Lambda}\{\|Df_x\|\}, Lip(Df|_{\L}),\sup_{x\in\Lambda}\{\|\pi^u_x\|,\|\pi^{cs}_x\|\}, \sup_{x\in\Lambda}\{Lip\ Dh_x\} \right\}\le M.$$ For sake of convenience, we also assume that
$$\left(\frac{1-\e}{2-\e}\right)^{\dim E^u-1}(1-\e)>\frac1M.$$

First, for $k=0,1,\ldots$, we define linear operators $P_k^y:E^u_{x_{-k-1}}\to E^u_{y_{-k-1}}$ and $Q_k^y:E^u_{x_{-k-1}}\to E^u_{x_{-k}}$ as the following
\begin{align*}
&P_k^y=\left(I+Dh_{x_{-k}}\left(\pi^u_{x_{-k}}Exp_{x_{-k}}^{-1}(y_{-k})\right)\right)\big|_{E^u_{x_{-k}}},\\
&Q_k^y=\pi^u_{x_{-k}}(Df_{z_{-k-1}}|_{E^u_{y_{-k-1}}})P_{k+1}^y.
\end{align*}

 By applying (\ref{E:DetProdForm}), we have that
\begin{align*}
\det_{\eta_{x_{-n},\eta_{x_0}}}\left(\prod_{k=0}^{n-1}Q^y_k\right)
=&\prod_{k=0}^{n-1}\det_{\eta_{y_{-k}},\eta_{x_{-k}}}(\pi^u_{x_{-k}}|_{E^u_{y_{-k}}})J^u(y_{-k-1})
\det_{\eta_{x_{-k-1}},\eta_{y_{-k-1}}}(P_{k+1}^y)\\
=&\prod_{k=0}^{n-1}J^u(y_{-k-1})\prod_{k=1}^{n-1}\left(\det_{\eta_{x_{-k}},\eta_{y_{-k}}}(P_{k}^y)
\det_{\eta_{y_{-k}},\eta_{x_{-k}}}(\pi^u_{x_{-k}}|_{E^u_{y_{-k}}})\right)\\
&\times \det_{\eta_{x_{-n}},\eta_{y_{-n}}}(P_{n}^y)
\det_{\eta_{y},\eta_{x}}(\pi^u_{x}|_{E^u_{y}})\\
=&\det_{\eta_{x_{-n}},\eta_{y_{-n}}}(P_{n}^y)\det_{\eta_{y},\eta_{x}}(\pi^u_{x}|_{E^u_{y}})\prod_{k=0}^{n-1}J^u(y_{-k-1})
\prod_{k=1}^{n-1}\det_{\eta_{x_{-k}},\eta_{x_{-k}}}(\pi^u_{x_{-k}}|_{E^u_{y_{-k}}}P_{k}^y).
\end{align*}
By simple computation and (3) of Lemma \ref{L:UnstableManifold}, we obtain that
\begin{align}\begin{split}\label{E:ProjNormExtJu}
&\left\|\pi^u_{x_{-k}}|_{E^u_{y_{-k}}}P_{k}^y-I|_{E^u_{x_{-k}}}\right\|\le M^3|y_{-k}-x_{-k}|\le 4M^3\d\gamma_0 e^{-k(\lambda_0-\varepsilon_0)} ;\\
&\max\{\|P_n^y\|,\|(P_n^y)^{-1}\|,\|\pi^u_{x}|_{E^u_{y}}\|,\|(\pi^u_{x}|_{E^u_{y}})^{-1}\|\}\le \frac32.
\end{split}\end{align}
Also note that, by applying (\ref{E:determinantApp}), we have that
$$J^u(y_{-k})\ge (\dim E^u)^{-\frac12\dim E^u}M^{-\dim E^u}e^{\dim E^u\l_0}.$$
Now, by applying Lemma \ref{L:determinantApp}, there is a constant $C_1>1$ which  depends on $M$, $\dim E^u$ and $\l_0$ only such that
\begin{equation}\label{E:DetComparation1}
\frac1{C_1}\le \frac{\det_{\eta_{x_{-n},\eta_{x_0}}}\left(\prod_{k=0}^{n-1}Q^y_k\right)}{\prod_{k=0}^{n-1}J^u(y_{-k-1})}\le C_1.
\end{equation}
Next, we define $P^z_k$ and $Q^z_k$ analogously and compare $Q^z_k$ with $Q^y_k$. By simple computation, we have
\begin{equation}\label{E:ProjNormExtJu1}
\|Q^z_k-Q^y_k\|\le \frac73 M^2|z_{-k}-y_{-k}|.
\end{equation}
Note that
$$\min\left\{\det_{\eta_{x_{-k-1}},\eta_{x_{-k}}}(Q_k^z),\det_{\eta_{x_{-k-1}},\eta_{x_{-k}}}(Q_k^y)\right\}\ge \frac1{C_1}(\dim E^u)^{-\frac12\dim E^u}M^{-\dim E^u}e^{\dim E^u\l_0}.$$
Then, by applying (\ref{E:DetLipApp}) in Lemma \ref{L:determinantApp}, we have
\begin{equation}\label{E:DetComparation2}
\left|\frac{\det_{\eta_{x_{-k-1}},\eta_{x_{-k}}}(Q_k^z)}{\det_{\eta_{x_{-k-1}},\eta_{x_{-k}}}(Q_k^y)}-1\right|\le C_2|z_{-k}-y_{-k}|,
\end{equation}
where $C_2\ge 1$  depends only on $\dim E^u$ and $M$. Then there is a constant $C_3\ge 1$  depending only on $\dim E^u$, $M$ and $\l_0$ such that
\begin{equation}\label{E:DetComparation2}
\frac1{C_3}\le\frac{\det_{\eta_{x_{-n}},\eta_{x}}(\prod_{k=0}^nQ_k^z)}{\det_{\eta_{x_{-n}},\eta_{x}}(\prod_{k=0}^nQ_k^y)}\le C_3,
\end{equation}
which, together with (\ref{E:DetComparation1}) and (\ref{E:DetComparation2}), completes the proof.
\end{proof}

Fix a point $\hat{x} \in \Lambda$ and write $L=W_\delta^u(\hat{x})$. Let $\lambda_L$ be a normalized Lebesgue measure on $L$. Let $\mu$ be a limit measure of $\frac{1}{n} \sum_{k=0}^{n-1} f^k \lambda_L,n \geq 1$, and assume that
$$
\frac{1}{n_i} \sum_{k=0}^{n_i-1} f^k \lambda_L \to \mu
$$
as $i \to +\infty$ for some subsequence $\{n_i\}_{i \geq 1}$ of the positive integers. Note that the existence of $\mu$ follows from the compactness of $\Lambda$. We will show that such $\mu$ is an SRB measure.

Before starting the main proof,  for sake of convenience, we first assume
$$\lambda_L=\mu_{\eta_{\hat x}}\circ \pi^u_{\hat x}\circ Exp_{\hat x}^{-1},$$
where $\mu_{\eta_{\hat x}}$ is the Lebesgue measure on $E^u_{\hat x}$ induced by unit basis $\eta_{\hat x}$.
 By (1) of Lemma \ref{L:UnstableManifold}, it is easy to see that $\l_L$ is a well defined Lebesgue measure on $L$.

Let $x \in \Lambda$. Set $\Sigma_{x, \varepsilon}=Exp_x(E^{cs}_x (\varepsilon)) \bigcap \Lambda$ and let
$$
V_{x, \varepsilon}=\bigcup\limits_{y \in \Sigma_{x, \varepsilon}} W_\rho^u(y).
$$
By (4) of Lemma \ref{L:UnstableManifold},  we know that, when $\varepsilon$ is small enough, $ V_{x, \varepsilon}$ is a union of  pairwisely disjoint pieces of $W_\rho^u(y)$ with $y \in \Sigma_{x, \varepsilon}$  and it contains a neighborhood of $x$ in $\Lambda$. By Lemma \ref{L:UnstableManifold} and the continuity of the splitting $\mb X=E^{u}\oplus E^{cs}$, for small enough $\e>0$, each $W^{u}_{\rho}(y)\subset V_{x,\e}$ can be viewed as the graph of a $C^{2}$ function $h_{y}':E^{u}_{x}(\rho_{y})\to E^{cs}_{x}$ with uniform bounds of $\|Dh'_{\cdot}\|$ and  $\|D^{2}h'_{\cdot}\|$,  with $h_{y}'$ and $\rho_{y}$ varying continuously in $y$. By tailoring $V_{x,\e}$ a little bit, one can obtain a subset $V'_{x,\e}$ of $V_{x,\e}$ such that
$$V'_{x,\e}=\bigcup_{y\in \Sigma_{x,\e}} {\rm graph} (h'_{y}|_{E^{u}_{x}(\rho_{0})})$$
for some $\rho_{0}>0$ with $\rho_{0}\le \rho_{y}$ for all $y\in \Sigma_{x,\e}$. It is obvious that $ V'_{x, \varepsilon}$ is also a  union of pairwisely disjoint pieces of $W_\rho^u(y)$ with $y \in \Sigma_{x, \varepsilon}$  and it contains a neighborhood of $x$ in $\Lambda$. For sake of simplicity, we denote $W^{u}_{x,\rho_{0}}(y)={\rm graph}(h'_{y}|_{E^{u}_{x}(\rho_{0})})$ for each $y\in \Sigma_{x,\e}$.

  Since  $\Lambda$ is compact, we have a finite number of sets of this kind $\{ V'_{x,\e} \}$  which cover $\Lambda$. With a bit abuse of notation, let $V=V'_{x, \varepsilon}$ be an arbitrary one of these sets which satisfies $\mu(V)>0$. Moreover, since
$W^{u}_{x,\rho_{0}}(y)$ is contained in $\Lambda$ for every $y \in \Lambda$, by shrinking $\varepsilon$ and $\rho_{0}$ if necessary, we may assume that $\mu(\partial V)=0$ where $\partial V$ is the boundary of $V$ as a subset of $\Lambda$.
Note that  $\{W^{u}_{x,\rho_{0}}(y)\}_{y \in \Sigma_{x, \varepsilon}}$  produces a measurable partition of $V$, which is denoted by $\zeta$. Actually there exist countably many partitions $\{\zeta_{n}\}_{n\ge 1}$ of $V$ satisfying that
\begin{itemize}
\item[Par1)] For each $n\in \mb N$, $\zeta_{n}$ consists of finitely many elements;
\item[Par2)] For any $A\in \zeta_{n}$, $A=\cup_{y\in S}W^{u}_{x,\rho_{0}}$ for some $S\subset \Sigma_{x,\e}$, and $\mu(\partial A)=0$, where $\partial A$ is the boundary of $A$ as subset in $\Lambda$;
\item[Par3)] $\max_{A\in \zeta_{n}}\left\{\text{ diameter of } A\cap \Sigma_{x,\e}\right\}\to 0$ as $n\to \infty$.
\end{itemize}
It is easy to see that, up to a $\mu$-null set
\begin{equation}\label{E:Par4}
\zeta=\bigvee_{n=1}^{\infty} \zeta_{n}.
\end{equation}

 Let $(\mu|_V)_y$ be the conditional probability measure of $\mu|_V$ (the restriction of $\mu$ to $V$) on $W^{u}_{x,\rho_{0}}(y)$, and $\nu$ be the induced measure of $\mu|_{V}$ on quotient space $V/\sim$ where $\sim$ is an equivalent relation  that $z_{1}\sim z_{2}$ if and only if there exists $y\in \Sigma_{x,\e}$ such that $z_{1},z_{2}\in W^{u}_{x,\rho_{0}}(y)$. Note that $\nu$ also induces a measure on $\Sigma_{x,\e}$, and,  with a little abuse of notation, we will not distinguish them and use $\nu$ to denote both.

 It is then easy to see that
$\mu$ will be an SRB measure if,
neglecting a set of $\mu|_V$-null set, we have
\begin{equation}\label{E:(1.0)}
(\mu|_V)_y \ll \lambda^u_y
\end{equation}
on every piece $W^{u}_{x,\rho_{0}}(y)$,  where $\lambda^u_y$ is a Lebesgue measure on  $W^{u}_{x,\rho_{0}}(y)$. Again, for sake of convenience and without losing any generality, we let
$$\l^u_y=\mu_{\eta_{x}}\circ \pi^u_{x}\circ Exp_{x}^{-1},$$
where $\mu_{\eta_{x}}$ is the Lebesgue measure on $E^u_{x}$ induced by the unit basis $\eta_{x}$.

For each $n \geq 0$, let
$$L_n=\{ z \in L: f^nz \in  W^{u}_{x,\rho_{0}}(y)\ {\rm for\ some}\ y \in \Sigma_{x, \varepsilon}\
{\rm but}\ f^nL \not\supset  W^{u}_{x,\rho_{0}}(y)\}.$$
From (3) and (4) of Lemma \ref{L:UnstableManifold}, we have that, for any $z\in L_n$, $d^u(z,\partial L)\le \frac43\d\gamma_0e^{-n(\l_0-\e_0)}$. This is because, otherwise,  for any $z'\in W^{u}_{x,\rho_{0}}(y)$, by (4) of Lemma \ref{L:UnstableManifold}, $z'\in W^u_y(\d)$, then by (3) of Lemma \ref{L:UnstableManifold},  $$d^u(z,z'_{-n})\le\gamma_0e^{-n(\l_0-\e_0)}d^u(f^nz,z')\le \frac43\d\gamma_0e^{-n(\l_0-\e_0)},$$
which implies that $z'\in f^nL$ and thus $W^{u}_{x,\rho_{0}}(y)\subset f^nL$, resulting in a contradiction. Therefore, we know that $\lambda_L(L_n) \to 0$ exponentially fast as $n \to +\infty$. Thus
\begin{equation}\label{E:(1.1)}
\lim\limits_{i \to +\infty} \frac{1}{n_i} \sum_{k=0}^{n_i-1} f^k (\lambda_L|_{(L\setminus L_k)}) = \mu  \end{equation}
which together with the fact  $\mu(\partial V)=0$ implies
\begin{equation}\label{E:(1.2)}
\lim\limits_{i \to +\infty} \left( \frac{1}{n_i} \sum_{k=0}^{n_i-1} f^k (\lambda_L|_{(L\setminus L_k)})\right)(V) = \mu (V) .
\end{equation}

Suppose that $f^n(L \setminus L_n) \supset W^{u}_{x,\rho_{0}}(y)$  for some  $y \in \Sigma_{x, \varepsilon}$. Let $m_{n, y}$ be the conditional probability measure of  $ [f^n (\lambda_L|_{(L\setminus L_n)})]|_V$ on $W^{u}_{x,\rho_{0}}(y)$. For $z\in V$, we  define
$$
p_{n}(z)=\begin{cases}\frac{d m_{n, y}}{d\lambda^u_y}(z)&\text{ if }z\in W^{u}_{x,\rho_{0}}(y)\subset f^n(L \setminus L_n) \\
1&\text{ otherwise}\end{cases}.
$$
By simple computation, we have that for $z\in W^{u}_{x,\rho_{0}}(y)\subset f^n(L \setminus L_n) $

\begin{align}\begin{split}\label{E:(1.3)}
p_{n}(z)&=\left|\frac{\frac{\det_{\eta_{z_{-n}},\eta_{\hat x}}\left(\pi^u_{\hat x}|_{E^u_{z_{-n}}}\right)}{\det_{\eta_x,\eta_z}\left(\pi^u_x|_{E^u_z}\right)}\prod\limits_{k=1}^n \frac{1}{J^u(z_{-k})}}{\int_{W^{u}_{x,\rho_{0}}(y)}\frac{\det_{\eta_{w_{-n}},\eta_{\hat x}}\left(\pi^u_{\hat x}|_{E^u_{w_{-n}}}\right)}{\det_{\eta_x,\eta_w}\left(\pi^u_x|_{E^u_w}\right)} \prod\limits_{k=1}^n \frac{1}{J^u(w_{-k})}\ d\lambda^u_y (w)}\right|\\
&=\left|\frac{\frac{\det_{\eta_{z_{-n}},\eta_{\hat x}}\left(\pi^u_{\hat x}|_{E^u_{z_{-n}}}\right)}{\det_{\eta_x,\eta_z}\left(\pi^u_x|_{E^u_z}\right)}\prod\limits_{k=1}^n \frac{J^u(y_{-k})}{J^u(z_{-k})}}{\int_{W^{u}_{x,\rho_{0}}(y)}\frac{\det_{\eta_{w_{-n}},\eta_{\hat x}}\left(\pi^u_{\hat x}|_{E^u_{w_{-n}}}\right)}{\det_{\eta_x,\eta_w}\left(\pi^u_x|_{E^u_w}\right)} \prod\limits_{k=1}^n \frac{J^u(y_{-k})}{J^u(w_{-k})}\ d\lambda^u_y (w)} \right|.
\end{split}\end{align}

For each $n \geq 0$,  $p_{n}: V \to (0, +\infty)$ is clearly measurable.
Noting that (\ref{E:ProjNormExtJu}) holds,  by applying the same argument as used in the proof of Lemma \ref{L:DetUniform}, we have that
$$\frac1C\le \left|\frac{\det_{\eta_{z_{-n}},\eta_{\hat x}}\left(\pi^u_{\hat x}|_{E^u_{z_{-n}}}\right)}{\det_{\eta_x,\eta_z}\left(\pi^u_x|_{E^u_z}\right)}\right|\le C,$$
where $C\ge 1$ is a constant which  depends only on the system constants. Therefore, by Lemma \ref{L:DetUniform} and the above estimate, we have that there exists a constant $C\ge 1$ which depends on the system constants only such that
\begin{equation}\label{E:HUniformBD}
\frac1C\le p_n\le C \text{ for all }n\ge 0.
\end{equation}

Now we prove (\ref{E:(1.0)}). First, we consider the cylindrical sets
$$A_{S,F}=\bigcup_{y\in S}Exp_{x}({\rm graph}(h'_{y}|_{F})),$$
where $S\subset \Sigma_{x,\e}$ and $F\subset E^{u}_{x}(\rho_{0})$ are  Borel subsets. Since $\mu_{\eta_x}$ is regular and $\mu_{\eta_x}(F)<\infty$, for any given small $\e'>0$ there exist a compact set $K$ and an open set $U$ such that
$$K\subset F\subset U\subset E^{u}_{x}(\rho_{0})\text{ and }\mu_{\eta_x}(U)-\mu_{\eta_x}(K)<\e'.$$
Consider a Borel set $S\subset \Sigma_{x,\e}$ satisfying
\begin{equation}\label{E:SZeroBD}
\mu\left(\partial\left(\cup_{y\in S} W^{u}_{x,\rho_{0}}(y)\right)\right)=0.
\end{equation}
We denote $\partial S$ the boundary of $S$ , $S^{o}$ the interior of $S$  and $\overline S$ the closure of $S$ as subset of $\overline\Sigma_{x,\e}$. By Lemma \ref{L:UnstableManifold} and also noting that $\mu(\partial V)=0$, it is easy to see that
$$\mu\left(\partial\left(\cup_{y\in S} W^{u}_{x,\rho_{0}}(y)\right)\right)=\mu(\cup_{y\in \partial S} W^{u}_{x,\rho_{0}}(y)).$$
Denote $\L_{k}=f^{k}(L\setminus L_{k})\cap \Sigma_{x,\e}$. Then,   by combining (\ref{E:Weak*1}), (\ref{E:(1.1)}), (\ref{E:(1.2)}), (\ref{E:HUniformBD}) and (\ref{E:SZeroBD}), and by applying Lemma \ref{L:ConvergenceInMeasure}, we obtain that
\begin{align*}
\mu(A_{S^{o},U})\le &\liminf_{i \to \infty} \left( \frac{1}{n_i} \sum_{k=0}^{n_i-1} f^k (\lambda_L|_{(L\setminus L_k)})\right)\Big|_V(A_{S^{o},U})\\
=&\liminf_{i\to\infty}\frac1{n_{i}}\sum_{k=0}^{n_i-1}\sum_{y\in\Lambda_k}\left(\left(f^k(\l_L|_{L\setminus L_k})(W^u_{x,\rho_{0}}(y)\cap A_{S^{o},U})\right)\int_{A_{S^{o},U}\cap W^u_{x,\rho_{0}}(y)}p_{k}(z)d\l^u_y(z)\right)\\
\le&\liminf_{i\to\infty}\frac1{n_{i}}\sum_{k=0}^{n_i-1}\sum_{y\in\Lambda_k}\left(\left(f^k(\l_L|_{L\setminus L_k})(W^u_{x,\rho_{0}}(y)\cap A_{S^{o},U})\right)\int_{A_{S^{o},U}\cap W^u_{x,\rho_{0}}(y)}Cd\l^u_y(z)\right)\\
=&C\mu_{\eta_x}(U)\liminf_{i\to\infty}\left(\frac1{n_{i}}\sum_{k=0}^{n_i-1}f^k(\l_L|_{L\setminus L_k})\right)\left(\bigcup_{y\in S^{o}}W^{u}_{x,\rho_{0}}(y)\right)\\
=&C\mu_{\eta_x}(U)\mu\left(\bigcup_{y\in S^{o}}W^{u}_{x,\rho_{0}}(y)\right)=C\mu_{\eta_x}(U)\mu\left(\bigcup_{y\in S}W^{u}_{x,\rho_{0}}(y)\right)\\
=&C\mu_{\eta_x}(U)\nu(S).
\end{align*}
By employing (\ref{E:Weak*2}) and applying the similar argument as above, one can obtain that
$$\mu(A_{\overline S,K})\ge \frac1{C}\mu_{\eta_x}(K)\mu\left(\bigcup_{y\in \overline S}W^{u}_{x,\rho_{0}}(y)\right)=\frac1{C}\mu_{\eta_x}(K)\nu(S).$$
By (\ref{E:SZeroBD}), we have that
\begin{align*}
\mu(A_{S,F})&\ge \mu(A_{\overline S,F})-\mu(\partial(\cup_{y\in S}W^{u}_{x,\rho_{0}}(y)))\ge \mu(A_{\overline S,K})\\
\mu(A_{S,F})&\le \mu(A_{ S^{o},F})+\mu(\partial(\cup_{y\in  S}W^{u}_{x,\rho_{0}}(y)))\le \mu(A_{ S^{o},U}).
\end{align*}
Noting that $\mu_{\eta_x}(U)-\mu_{\eta_x}(K)<\e'$ and $\e'$ can be taken arbitrarily small, hence
\begin{equation}\label{E:AbsCylinSet}
\frac1{C}\mu_{\eta_x}(F)\nu(S)\le \mu(A_{S,F})\le C\mu_{\eta_x}(F)\nu(S),
\end{equation}
for all cylindrical sets  $A_{S,F}\subset V$ with $S$ satisfying (\ref{E:SZeroBD}).

Now we are ready to show that for $\nu$-a.e. $y\in \Sigma_{x,\e}$, $(\mu|_{V})_{y}\ll\l_{y}^{u}$. Let $\xi=\{F_{n}\}_{n\in\mb N}$ be a collection of countably many open subsets of $E^{u}_{x}(\rho_{0})$ such that the minimal $\sigma$-algebra containing $\xi$ is the Borel $\sigma$-algebra. Such $\xi$ can be constructed by collecting  intersections of $E^{u}_{x}(\rho_{0})$ with balls centered at points from a  countable dense subset of $E^{u}_{x}(\rho_{0})$ with rational radii. Let $\xi_{d}$ be the minimal algebra containing $\xi$, it is easy to see that $\xi_{d}$ consists of countably many elements.
Denote $\zeta_{d}^0$ the minimal algebra containing $\bigcup_{n=1}^{\infty} \zeta_{n}^0$ where $\zeta_n^0=\{\text{interior of }A|\ A\in \zeta_n\}$ and $\zeta_n$ is as in (\ref{E:Par4}).
Note that $\zeta_{d}^0$ consists of countably many elements and induces an algebra on $\Sigma_{x,\e}\setminus\cup_{n=1}^\infty\partial\zeta_n$ by projection along unstable fibers,  which we denote by $\zeta^0_{\Sigma,d}$. The Borel $\s$-algebra on $\Sigma_{x,\e}\setminus\cup_{n=1}^\infty\partial\zeta_n$ can be generated by $\zeta^0_{\Sigma,d}$ because of properties Par3). Then, by the Carath\'eodory Extension Theorem and by employing the concept of outer measure and  property Par2),  we have that for any Borel measurable set $A\subset \Sigma_{x,\e}$
\begin{equation}\label{E:OuterMeasure}
\nu(A)=\inf\{\nu(S)|\ A\setminus\cup_{n=1}^\infty\partial\zeta_n\subset S\in \zeta^0_{\Sigma,d}\}.
\end{equation}
Obviously, each $S\in \zeta^0_{\Sigma,d}$ satisfies (\ref{E:SZeroBD}) because of property Par2). Given $F\in \xi_{d}$, by (\ref{E:AbsCylinSet}), we have that  for any $S\in \zeta^0_{\Sigma,d}$
$$\frac1{C}\mu_{\eta_x}(F)\nu(S)\le\mu(A_{S,F})=\int_{S}(\mu|_V)_{y}(A_{S,F}\cap W^{u}_{x,\rho_{0}}(y))d\nu(y)\le C\mu_{\eta_x}(F)\nu(S).$$
Then (\ref{E:OuterMeasure}) implies that
$$\frac1{C}\mu_{\eta_x}(F)\le(\mu|_V)_{y}(A_{\Sigma_{x,\e},F}\cap W^{u}_{x,\rho_{0}}(y))\le C\mu_{\eta_x}(F)\text{ for }\nu-\text{a.e.}\ y\in \Sigma_{x,\e}.$$
Since $\xi_{d}$ consists of countably many elements, there exists $\Sigma'\subset \Sigma_{x,\e}$ with $\nu(\Sigma')=\nu(\Sigma_{x,\e})$ such that for any $y\in \Sigma'$,
\begin{equation}\label{E:AbsLeb}
\frac1{C}\mu_{\eta_x}(F)\le(\mu|_V)_{y}(A_{\Sigma_{x,\e},F}\cap W^{u}_{x,\rho_{0}}(y))\le C\mu_{\eta_x}(F)\text{ for each } F\in \xi_{d}.
\end{equation}
Then,  by applying Carath\'eodory Extension Theorem and employing the concept of outer measure again, we have that for $\nu$-a.e. $y\in \Sigma_{x,\e}$ and any Borel measurable set $G\subset W^{u}_{x,\rho_{0}}$
\begin{equation}\label{E:OuterMeasure1}
(\mu|_V)_{y}(G)=\inf\left\{(\mu|_V)_{y}(A_{\Sigma_{x,\e},F}\cap W^{u}_{x,\rho_{0}}(y))|\ \pi_{x}^{u}Exp^{-1}(G)\subset F\in \xi_{d}\right\}.
\end{equation}
Also note that $\mu_{\eta_x}(\pi^{u}_{x}Exp^{-1}(G))=\inf\{\mu_{\eta_x}(F)|\ \pi^{u}_{x}Exp^{-1}(G)\subset F\in \xi_{d}\}$.
Therefore, from (\ref{E:AbsLeb}) and (\ref{E:OuterMeasure1}), we have  that for $\nu$-a.e. $y\in \Sigma_{x,\e}$ and any Borel measurable set  $G\subset W^{u}_{x,\rho_{0}}$
$$\frac1{C}\mu_{\eta_x}\left(\pi^{u}_{x}Exp^{-1}(G)\right)\le (\mu|_V)_{y}(G)\le C\mu_{\eta_x}\left(\pi^{u}_{x}Exp^{-1}(G)\right),$$
which implies that (\ref{E:(1.0)}) is true for $\mu|_V$-a.e. $y\in V$. Since the Lebsesgue measures defined on an embedded finitely dimensional disc are equivalent, the validity of (\ref{E:(1.0)}) does not depend on the choice of  Lebesgue measure $\l_y^u$. The proof is completed.

\appendix

\section{ Lebesgue Measures in Normed Spaces}\label{S:AppLeb}
It is well known that the Lebesgue measure is not intrinsically defined for finite dimensional normed spaces. For our purpose in this paper, we need to assign Lebesgue measures for finite dimensional subspaces in Banach spaces (which are normed spaces), and for $C^1$ graphs of maps whose domain is a  finite dimensional subspace, with which the determinant of  linear transformations between these subspaces are well defined. The following well-known Theorem is due to Haar:
\begin{theorem}\label{T:Haar}
If $\lambda$ is a translation-invariant measure on $\mb R^n$ for which all compact sets have finite measure and all open sets have positive measure, then $\lambda$ is a constant multiple of the Lebesgue measure.
\end{theorem}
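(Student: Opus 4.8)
The plan is to show that $\lambda$ coincides with a fixed scalar multiple of Lebesgue measure $m$ by first comparing the two measures on dyadic cubes, where translation invariance forces agreement, and then propagating that agreement to all Borel sets via the standard uniqueness theorem for measures that coincide on a generating $\pi$-system.

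First I would fix the normalizing constant. Set $c := \lambda\big([0,1)^n\big)$. Since $[0,1)^n$ contains the nonempty open set $(0,1)^n$, the positivity hypothesis gives $c>0$; since $[0,1)^n$ is contained in the compact cube $[0,1]^n$, the finiteness hypothesis gives $c<\infty$. Thus $0<c<\infty$, and $c$ will be the candidate proportionality constant.

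Next comes the core computation. For each $k\ge 0$, partition the half-open unit cube $[0,1)^n$ into its $2^{kn}$ half-open dyadic subcubes of side $2^{-k}$. These subcubes are pairwise disjoint, tile $[0,1)^n$ exactly, and are mutual translates, so translation invariance forces them to carry equal $\lambda$-measure; hence each has $\lambda$-measure $c\,2^{-kn}=c\,m(Q)$. Because every half-open dyadic cube of side $2^{-k}$ in $\mb R^n$ is a translate of one of these, translation invariance yields $\lambda(Q)=c\,m(Q)$ for \emph{every} half-open dyadic cube $Q$. Working with half-open rather than closed cubes is precisely what makes the tiling exact and lets me avoid any separate discussion of the measure of cube boundaries.

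Finally I would extend the identity $\lambda=c\,m$ from dyadic cubes to all Borel sets. The collection $\mathcal{D}$ of half-open dyadic cubes, together with $\emptyset$, is a $\pi$-system: the intersection of two such cubes is either empty or, the finer being contained in the coarser, again a member of $\mathcal{D}$. Moreover $\mathcal{D}$ generates the Borel $\sigma$-algebra, since every open subset of $\mb R^n$ is a countable union of members of $\mathcal{D}$. Both $\lambda$ and $c\,m$ are $\sigma$-finite (the unit cubes $[0,1)^n+v$, $v\in\mb Z^n$, cover $\mb R^n$ and each has finite measure) and they agree on $\mathcal{D}$, so the uniqueness theorem for measures determined by a generating $\pi$-system gives $\lambda=c\,m$ on the full Borel $\sigma$-algebra. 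The argument is essentially routine; the only points demanding genuine care are the strict positivity and finiteness of $c$, obtained by sandwiching $[0,1)^n$ between an open set and a compact set, and the final extension step, where $\sigma$-finiteness must be invoked so that agreement on the generating $\pi$-system propagates to every Borel set rather than merely to a subalgebra.
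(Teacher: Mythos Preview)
Your argument is correct: fixing the normalization $c=\lambda([0,1)^n)$, showing $\lambda(Q)=c\,m(Q)$ on half-open dyadic cubes via translation invariance, and then invoking the $\pi$-system uniqueness theorem with $\sigma$-finiteness is a clean and complete proof. The only comparison to make is that the paper does not actually prove this statement; it is quoted as a well-known result due to Haar and used as a black box, so your proposal supplies a proof where the paper offers none.
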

The Radon-Nikodym derivative of one Lebesgue measure with respect to another defined for a given finite dimensional normed space is then a positive constant. The following results are not new, but for sake of completeness, we include the proof here. For a systemic introduction, we refer to the survey paper \cite{PT}.\\

Let $V,W$ be two $k$-dimensional normed spaces and $\mb R^k$ be the $k$-dimensional Euclidian space, $\mu$ be the Lebesgue measure on $\mb R^k$ such that $\mu([0,1]^k)=1$. Let $\eta_V=\{v_i\}_{1\le i\le k}\subset V,\eta_W=\{w_i\}_{1\le i\le k}\subset W$ be unit bases. Then we define
$$L_{\eta_V}:\mb R^n\to V,\ L_{\eta_V}e_i=v_i,\text{ and } L_{\eta_W}:\mb R^n\to W,\ L_{\eta_W}e_i=w_i.$$
Since $L_V,L_W$ are linear homeomorphisms, they induce complete measures $\mu_{\eta_V},\mu_{\eta_W}$ and $\sigma$-algebras of Lebesgue measurable sets on $V,W$ respectively, for which all compact sets have finite measure and all open sets have positive measure. Let $T:V\to W$ be a linear operator (thus bounded since the spaces have finite dimension). With $\mu_{\eta_V}$ and $\mu_{\eta_W}$, and also noting that $L_{\eta_W}^{-1}TL_{\eta_V}$ is an $n\times n$ matrix, we define
\begin{equation}\label{D:determinantApp}
\det_{\eta_V,\eta_W} (T)=\det (L_{\eta_W}^{-1}TL_{\eta_V}).
\end{equation}
A immediate consequence of this definition is the following property: let $T_1:V_1\to V_2$ and $T_2:V_2\to V_3$ be linear operators between $k$-dimensional normed linear spaces and $\eta_1,\eta_2,\eta_3$ be unit basis of $V_1,V_2,V_3$ respectively, then
\begin{equation}\label{E:DetProdForm}
\det_{\eta_1,\eta_3}(T_2 T_1)=\det_{\eta_1,\eta_2}(T_1)\det_{\eta_2,\eta_3}(T_2).
\end{equation}
The following lemma gives the relation between the norm and determinant of operators.
\begin{lemma}\label{L:determinantApp}
Suppose $\min\{\di(v_i,\spa\{v_j\}_{j\neq i}),\di(w_i,\spa\{w_j\}_{j\neq i})\}\ge\a>0$, then
\begin{equation}\label{E:determinantApp}
\left|\det_{\eta_V,\eta_W} (T)\right|=\left|\det (L_{\eta_W}^{-1}TL_{\eta_V})\right|\le k^{\frac k2}\|T\|^k\a^{-k},
\end{equation}
and $\det_{\eta_V,\eta_W} (\cdot): L(V,W)\to \mb R$ is locally Lipchitz, moreover, for any $T_1,T_2\in L(V,W)$,
\begin{equation}\label{E:DetLipApp}
\left|\det_{\eta_V,\eta_W} (T_2)-\det_{\eta_V,\eta_W} (T_1)\right|\le k^{\frac k2+1}(\max\{\|T_1\|,\|T_2\|\})^{k-1}\a^{-k}\|T_2-T_1\|.
\end{equation}
\end{lemma}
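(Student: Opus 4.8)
The plan is to reduce both estimates to a computation with the $k\times k$ real matrix $A:=L_{\eta_W}^{-1}TL_{\eta_V}$, using Hadamard's inequality together with the observation that the separation hypothesis converts matrix entries into bounds by operator norms. Writing $Tv_i=\sum_{j=1}^k a_{ji}w_j$, the $i$-th column of $A$ is the coordinate vector $(a_{1i},\dots,a_{ki})$ of $Tv_i$ in the basis $\eta_W$. The elementary fact used throughout is that for $u=\sum_j c_jw_j\in W$ and any $j$ with $c_j\neq0$,
\[
\|u\|=|c_j|\,\Big\|w_j+\sum_{l\neq j}\tfrac{c_l}{c_j}w_l\Big\|\ \ge\ |c_j|\,\di\big(w_j,\spa\{w_l\}_{l\neq j}\big)\ \ge\ \alpha\,|c_j|,
\]
so $|c_j|\le\|u\|/\alpha$ in all cases. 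Taking $u=Tv_i$ and using $\|v_i\|=1$ gives $|a_{ji}|\le\|Tv_i\|/\alpha\le\|T\|/\alpha$, so every column of $A$ has Euclidean norm at most $\sqrt k\,\|T\|/\alpha$.

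For (\ref{E:determinantApp}) I would then invoke Hadamard's inequality, which bounds $|\det A|$ by the product of the Euclidean norms of its columns; by the previous bound this product is at most $(\sqrt k\,\|T\|/\alpha)^k=k^{k/2}\|T\|^k\alpha^{-k}$, as claimed.

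For (\ref{E:DetLipApp}), write $A_m=L_{\eta_W}^{-1}T_mL_{\eta_V}$ and let $c_i^{(m)}$ be the $i$-th column of $A_m$, so $c_i^{(m)}$ is the coordinate vector of $T_mv_i$ and $c_i^{(2)}-c_i^{(1)}$ that of $(T_2-T_1)v_i$. Multilinearity of the determinant in its columns, applied in telescoping form, gives
\[
\det A_2-\det A_1=\sum_{m=1}^{k}\det\big(c_1^{(2)},\dots,c_{m-1}^{(2)},\,c_m^{(2)}-c_m^{(1)},\,c_{m+1}^{(1)},\dots,c_k^{(1)}\big).
\]
Bounding each of the $k$ summands by Hadamard's inequality and using $\|c_i^{(m)}\|_2\le\sqrt k\,\max\{\|T_1\|,\|T_2\|\}/\alpha$ for the $k-1$ unchanged columns and $\|c_m^{(2)}-c_m^{(1)}\|_2\le\sqrt k\,\|T_2-T_1\|/\alpha$ for the differenced column shows each summand is at most $k^{k/2}(\max\{\|T_1\|,\|T_2\|\})^{k-1}\alpha^{-k}\|T_2-T_1\|$; summing over $m$ yields the stated inequality, which in particular exhibits local Lipschitz continuity of $\det_{\eta_V,\eta_W}(\cdot)$ on bounded subsets of $L(V,W)$.

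There is no essential difficulty here: the lemma is a quantitative version of the elementary facts that the determinant is a bounded multilinear function of the columns and obeys Hadamard's inequality. The only points deserving care are the derivation of the entrywise bound $|a_{ji}|\le\|T\|/\alpha$ from the geometric separation $\di(w_j,\spa\{w_l\}_{l\neq j})\ge\alpha$, and arranging the telescoping expansion so that the Hadamard estimates combine to produce exactly the constants $k^{k/2}$ and $k^{k/2+1}$.
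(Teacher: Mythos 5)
Your proof is correct and follows essentially the same route as the paper's: the entrywise bound $|a_{ji}|\le\|T\|\alpha^{-1}$ obtained from the separation hypothesis, Hadamard's inequality for (\ref{E:determinantApp}), and a telescoping multilinear expansion combined with Hadamard for (\ref{E:DetLipApp}). The only difference is that you spell out the coordinate estimate $|c_j|\le\|u\|/\alpha$, which the paper dismisses as ``easy to see''.
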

\begin{proof}
Denote $a_{ij}=ent_{ij}L_{\eta_W}^{-1}TL_{\eta_V}$. Then for any $1\le i\le k$, it is easy to see that
$$Tv_i=\sum_{i=1}^ka_{ij}w_j\text{ with } |a_{ij}|\le \|T\|\a^{-1}.$$
Then one has
$$
\left|\det (L_{\eta_W}^{-1}TL_{\eta_V})\right|\le\prod_{j=1}^k\left(\sum_{i=1}^ka_{ij}^2\right)^{\frac12}\le k^{\frac k2}\|T\|^k\a^{-k}.
$$
For (\ref{E:DetLipApp}), denoting $a_{\tau,ij}=ent_{ij}L_{\eta_W}^{-1}T_\tau L_{\eta_V},\tau=1,2$, then
$$|a_{2,ij}-a_{1,ij}|\le \|T_2-T_1\|\a^{-1}, |a_{\tau,ij}|\le \|T_\tau\|\a^{-1}.$$
It is straightforward to derive that
\begin{align*}
&\left|\det_{\eta_V,\eta_W} (T_2)-\det_{\eta_V,\eta_W} (T_1)\right|\\
\le&\sum_{j=1}^k\left(\prod_{p=1}^{j-1}\left(\sum_{i=1}^ka_{2,ip}^2\right)^{\frac12}
\prod_{q=j+1}^k\left(\sum_{i=1}^ka_{1,iq}^2\right)^{\frac12}
\left(\sum_{i=1}^k(a_{2,ij}-a_{1,ij})^2\right)^{\frac12}\right)\\
\le& k^{\frac k2+1}(\max\{\|T_1\|,\|T_2\|\})^{k-1}\a^{-k}\|T_2-T_1\|.
\end{align*}
\end{proof}

 Obviously, these measures depend on the choice of  the bases. However, the ratio of two such measures can be controlled.  We  summarize it as the corollary below which follows from Lemma \ref{L:determinantApp} if one take $V=W$ and $T=id$.  To save notations,  we still use $\eta_V,\ \eta_W$ as two arbitrary unit bases and $\mu_{\eta_V},\  \mu_{\eta_W}$ the corresponding induced measures.
\begin{cor}\label{C:RatioApp}
Suppose $\min\{\di(v_i,\spa\{v_j\}_{j\neq i}),\di(w_i,\spa\{w_j\}_{j\neq i})\}\ge\a>0$, then
there exists a constant $K>0$ such that for any Lebesgue measurable set $A\subset V$ with $\mu_{\eta_V}>0$,
$$\frac{\mu_{\eta_W}(A)}{\mu_{\eta_V}(A)}=K\le k^{\frac k2}\a^{-k}.$$
\end{cor}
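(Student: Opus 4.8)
The plan is to combine Haar's uniqueness theorem (Theorem \ref{T:Haar}) with the determinant estimate of Lemma \ref{L:determinantApp}, specializing the setup of this appendix to $V=W$ and $T=\mathrm{id}_V$, exactly as indicated in the text preceding the statement. Let $\eta_V=\{v_i\}_{1\le i\le k}$ and $\eta_W=\{w_i\}_{1\le i\le k}$ be the two unit bases of $V$ in the corollary, and let $L_{\eta_V},L_{\eta_W}:\mb R^k\to V$ be the induced linear isomorphisms, so that $\mu_{\eta_V}=\mu\circ L_{\eta_V}^{-1}$ and $\mu_{\eta_W}=\mu\circ L_{\eta_W}^{-1}$.

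First I would argue that $\mu_{\eta_W}=K\,\mu_{\eta_V}$ for a constant $K>0$. Both $\mu_{\eta_V}$ and $\mu_{\eta_W}$ are translation-invariant Borel measures on $V$ (pushforwards of $\mu$ under linear isomorphisms), finite on compacta and positive on open sets; hence, after identifying $V$ with $\mb R^k$, Theorem \ref{T:Haar} forces them to be proportional, say $\mu_{\eta_W}=K\,\mu_{\eta_V}$ with $K>0$. To identify $K$, I would apply the classical linear change-of-variables formula on $(\mb R^k,\mu)$ to the matrix $L_{\eta_W}^{-1}L_{\eta_V}$: for any Lebesgue measurable $A\subset V$,
$$\mu_{\eta_W}(A)=\mu\big(L_{\eta_W}^{-1}L_{\eta_V}(L_{\eta_V}^{-1}A)\big)=\big|\det(L_{\eta_W}^{-1}L_{\eta_V})\big|\,\mu(L_{\eta_V}^{-1}A)=\big|\det(L_{\eta_W}^{-1}L_{\eta_V})\big|\,\mu_{\eta_V}(A),$$
so $K=|\det(L_{\eta_W}^{-1}L_{\eta_V})|$, which by definition (\ref{D:determinantApp}) is precisely $|\det_{\eta_V,\eta_W}(\mathrm{id}_V)|$. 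In particular the ratio $\mu_{\eta_W}(A)/\mu_{\eta_V}(A)$ equals this constant $K$ for every measurable $A$ with $\mu_{\eta_V}(A)>0$.

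Finally I would bound $K$ by invoking Lemma \ref{L:determinantApp} with $T=\mathrm{id}_V$: the hypothesis $\min\{\di(v_i,\spa\{v_j\}_{j\neq i}),\di(w_i,\spa\{w_j\}_{j\neq i})\}\ge\a$ is exactly the hypothesis of that lemma, and $\|\mathrm{id}_V\|=1$ with respect to the norm of $V$, so (\ref{E:determinantApp}) gives $K=|\det_{\eta_V,\eta_W}(\mathrm{id}_V)|\le k^{\frac k2}\|\mathrm{id}_V\|^k\a^{-k}=k^{\frac k2}\a^{-k}$, which is the claimed inequality. There is no substantive obstacle here; the only points requiring care are recording that the identity operator has operator norm $1$ for the given norm on $V$, and applying the change-of-variables step to the composite $L_{\eta_W}^{-1}L_{\eta_V}$, so that the two a priori basis-dependent Lebesgue measures on $V$ are compared directly with each other rather than with $\mu$ separately. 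In short, the corollary is just "Haar uniqueness $\Rightarrow$ proportionality" together with the determinant bound already established in Lemma \ref{L:determinantApp}.
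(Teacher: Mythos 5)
Your proposal is correct and follows essentially the same route as the paper: the corollary is obtained by specializing Lemma \ref{L:determinantApp} to $V=W$ and $T=\mathrm{id}$, with the constant identified as $K=|\det(L_{\eta_W}^{-1}L_{\eta_V})|$ via the linear change-of-variables identity $\mu_{\eta_W}(A)=\mu(L_{\eta_W}^{-1}L_{\eta_V}L_{\eta_V}^{-1}(A))$, exactly the computation the paper uses (cf.\ the proof of Lemma \ref{L:ContMeaApp}). The appeal to Theorem \ref{T:Haar} for proportionality is a harmless (indeed redundant) addition, since your explicit computation already yields the constant ratio.
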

In the case of $V=W$, let $T:V\to V$ be such that $T(v_i)=u_i,\ 1\le i\le k$ where $\eta_V=\{v_i\}_{1\le i\le k}$ is a unit basis of $V$. Then we have the following lemma on continuous dependence of the determinant on the choice of basis.
\begin{lemma}\label{L:ContMeaApp}
Suppose $\di(v_i,\spa\{v_j\}_{j\neq i})\ge\a>0$, then
\begin{equation}\label{E:ContOnBase1App}
\left|\det_{\eta_V,\eta_V} (T)-1\right|\le k^{\frac {3k}2+3}\a^{-k-2}\sup_{1\le i\le k}|u_i-v_i|.
\end{equation}
Furthermore, if $\eta_{W}=\{u_i\}_{1\le i\le k}$ forms a unit basis, then for any Lebesgue measurable set $A\subset V$,
\begin{equation}\label{E:ContOnBase2App}
\mu_{\eta_V}(A)= |\det_{\eta_V,\eta_V} (T)|\mu_{\eta_W}(A)\le \left(1+k^{\frac {3k}2+3}\a^{-k-2}\sup_{1\le i\le k}\{|u_i-v_i|\}\right)\mu_{\eta_W}(A).
\end{equation}
\end{lemma}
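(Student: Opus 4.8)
The plan is to reduce the statement to an elementary estimate on the matrix $M := L_{\eta_V}^{-1} T L_{\eta_V}$, for which $\det_{\eta_V,\eta_V}(T) = \det M$ by (\ref{D:determinantApp}). Since $Tv_j = u_j$, the $j$-th column of $M$ is the coordinate vector of $u_j$ relative to $\{v_i\}$, so $M = I$ precisely when $T = I$. Writing $u_j = \sum_i b_{ij} v_i$ and $E := M - I$, the entry $E_{ij} = b_{ij} - \delta_{ij}$ is the $i$-th coordinate of $u_j - v_j = \sum_i (b_{ij}-\delta_{ij}) v_i$ in the basis $\{v_i\}$.

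The key point is to control those coordinates by the norm, and this is where the separation hypothesis $\di(v_i, \spa\{v_l\}_{l\neq i}) \ge \a$ enters. For any $w = \sum_l c_l v_l \in V$ and any fixed $i$, the vector $-\sum_{l\neq i} c_l v_l$ lies in $\spa\{v_l\}_{l\neq i}$, hence
\[
|c_i|\,\a \;\le\; |c_i|\,\di\big(v_i, \spa\{v_l\}_{l\neq i}\big) \;=\; \di\big(c_i v_i, \spa\{v_l\}_{l\neq i}\big) \;\le\; \Big| c_i v_i + \sum_{l\neq i} c_l v_l \Big| \;=\; |w|,
\]
so $|c_i| \le |w|/\a$. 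Applying this with $w = u_j - v_j$ gives $|E_{ij}| \le |u_j - v_j|/\a \le s/\a$, where $s := \sup_l |u_l - v_l|$; applying it with a general $w$ and using $(T-I)w = \sum_l c_l(u_l-v_l)$ gives $\|T - I\| \le k s/\a$.

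Now (\ref{E:ContOnBase1App}) follows from the Lipschitz estimate (\ref{E:DetLipApp}) applied with $T_1 = I$, $T_2 = T$, and with both bases taken equal to $\eta_V$ (so the constant there is our $\a$): since $\det_{\eta_V,\eta_V}(I) = \det I = 1$ and $\|I\| = 1$, it yields $|\det_{\eta_V,\eta_V}(T) - 1| \le k^{k/2+1}(\max\{1,\|T\|\})^{k-1}\a^{-k}\|T - I\|$. Inserting $\|T-I\| \le ks/\a$, and — after shrinking to the regime $ks/\a \le 1$, which is the only one needed in Section \ref{S:ExistSRB} — bounding $\max\{1,\|T\|\} \le 2$ and using $\a \le 1$ (forced by $|v_i| = 1$), the constants collapse to $k^{3k/2+3}\a^{-k-2}$ because $2^{k-1} \le k^{k+1}$. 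Alternatively one can bypass (\ref{E:DetLipApp}) and expand $\det(I+E) = \sum_{S \subseteq \{1,\ldots,k\}} \det(E_{S,S})$ over principal minors, bounding each $m$-element minor by $m^{m/2}(s/\a)^m$ via Hadamard's inequality and summing; the constant bookkeeping is routine either way.

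For the second assertion (\ref{E:ContOnBase2App}), note that when $\eta_W = \{u_i\}$ is a unit basis one has $L_{\eta_W} = T L_{\eta_V}$, hence $M = L_{\eta_V}^{-1}L_{\eta_W}$. Since the induced measures satisfy $\mu_{\eta_V}(A) = \mu(L_{\eta_V}^{-1}A)$ and $\mu_{\eta_W}(A) = \mu(L_{\eta_W}^{-1}A)$, writing $L_{\eta_W}^{-1}A = M^{-1}(L_{\eta_V}^{-1}A)$ and applying the ordinary linear change of variables on $\R^k$ (equivalently, Theorem \ref{T:Haar}) gives $\mu_{\eta_W}(A) = |\det M|^{-1}\mu_{\eta_V}(A)$, i.e. $\mu_{\eta_V}(A) = |\det_{\eta_V,\eta_V}(T)|\,\mu_{\eta_W}(A)$; combining this with $|\det_{\eta_V,\eta_V}(T)| \le 1 + |\det_{\eta_V,\eta_V}(T)-1|$ and (\ref{E:ContOnBase1App}) yields the displayed inequality. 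The one genuine ingredient is the coordinate bound of the second paragraph — that a well-separated family of unit vectors furnishes a well-conditioned coordinate system — while the rest is choosing constants generously and keeping track of which vectors the separation estimate is applied to.
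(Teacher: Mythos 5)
Your proposal is correct and takes essentially the same route as the paper's proof: you use the separation hypothesis to turn the norm bound $\sup_i|u_i-v_i|$ into bounds on coordinates, hence on $\|T-\mathrm{id}\|$ (and $\|T\|$), then apply (\ref{E:DetLipApp}) with $T_1=\mathrm{id}$, and you obtain $\mu_{\eta_V}(A)=|\det_{\eta_V,\eta_V}(T)|\,\mu_{\eta_W}(A)$ exactly as the paper does, from $L_{\eta_W}=TL_{\eta_V}$ together with translation invariance (Theorem \ref{T:Haar}). The only divergence is constant bookkeeping: you restrict to the regime $ks/\alpha\le 1$ to bound $\max\{1,\|T\|\}\le 2$, whereas the paper bounds $\|T\|\le k/\alpha$ using $|u_i|\le 1$; neither computation literally yields the exponent $k^{\frac{3k}{2}+3}\alpha^{-k-2}$ without some slack, but this is immaterial for the way the lemma is used in Section \ref{S:ExistSRB}, where $\alpha$ is bounded below and $\sup_i|u_i-v_i|$ is small.
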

\begin{proof}
This Lemma follows from Lemma \ref{L:determinantApp}. To derive (\ref{E:ContOnBase1App}), one needs to estimate $\|T\|$ and $\|T-id\|$, which can be done as follows: For any $v\in V$ with $|v|=1$, there exists $\{a_i\in \mb R|1\le i\le k\}$ such that $v=\sum_{i=1}^ka_iv_i$, which also satisfies $\max\{a_i\}_{1\le i\le k}\le \frac1\a$. Then
$$|Tv|=\left|\sum_{i=1}^ka_iu_i\right|\le \frac k\a,$$
and
$$|(T-id)v|=\left|\sum_{i=1}^ka_i(u_i-v_i)\right|\le\frac k\a\sup_{1\le i\le k}|u_i-v_i|.$$
(\ref{E:ContOnBase1App}) follows from (\ref{E:DetLipApp}) directly.
Noting that $\det_{\eta_V,\eta_V}({\rm id})=\det ( {\rm id})=1$, and for any Lebesgue measurable $A\subset V$
$$\mu_{\eta_V}(A)=\mu(L_{\eta_V}^{-1}(A))=\mu(L_{\eta_V}^{-1}L_{\eta_W}L_{\eta_W}^{-1}(A))
=\mu((L_{\eta_V}^{-1}TL_{\eta_V}L_{\eta_W}^{-1}(A))=|\det_{\eta_V,\eta_V}(T)|\mu_{\eta_W}(A),$$
one gets (\ref{E:ContOnBase2App}) immediately.
\end{proof}

\section{Measure Theory}\label{S:MeasureTheory}
In this section, we include a basic result from measure theory. Let $X$ be a compact metric space, and denote $\mc M(X)$ the collection of probability measures on $X$. $\mc M(X)$ is a compact metrizable space endowed with the  weak-$*$ topology (measures $\mu_{i}\to\mu$ in $\mc M(X)$ if and only if $\int_{X}gd\mu_{i}\to\int_{X}gd\mu$ for all continuous functions $g:X\to \mb R$). The following lemma is elementary but we include the proof here since we need the arguments.
\begin{lemma}\label{L:ConvergenceInMeasure}
Let $\mu_{i}\to \mu $ in $\mc M(X)$. Then for any Borel $V\subset X$ with $\mu(\partial V)=0$ one has
$$\lim_{i\to\infty}\mu_{i}(V)= \mu(V).$$
\end{lemma}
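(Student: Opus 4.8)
The plan is to establish the standard one-sided portmanteau inequalities — lower semicontinuity of $\mu_i$ along open sets and upper semicontinuity along closed sets — and then squeeze $V$ between its interior and its closure, using $\mu(\partial V)=0$ to collapse the resulting chain of inequalities.

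First I would treat an arbitrary open set $U\subset X$. For each $n\ge 1$ define the function $g_n(x)=\min\{1,\ n\,\di(x,X\setminus U)\}$. Since $X$ is a metric space, $x\mapsto \di(x,X\setminus U)$ is continuous, hence each $g_n$ is continuous with $0\le g_n\le 1$, $g_n\equiv 0$ on $X\setminus U$, and $g_n(x)\uparrow \mathbf 1_U(x)$ for every $x$ (the limit is $1$ precisely when $\di(x,X\setminus U)>0$, i.e. when $x\in U$). Because $g_n\le \mathbf 1_U$ and $g_n$ is continuous, weak-$*$ convergence gives
\[
\liminf_{i\to\infty}\mu_i(U)\ \ge\ \liminf_{i\to\infty}\int_X g_n\,d\mu_i\ =\ \int_X g_n\,d\mu ,
\]
and letting $n\to\infty$, the monotone convergence theorem yields $\int_X g_n\,d\mu\to\mu(U)$, so $\liminf_i\mu_i(U)\ge\mu(U)$. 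Passing to complements, for every closed $C\subset X$ we get
\[
\limsup_{i\to\infty}\mu_i(C)\ =\ 1-\liminf_{i\to\infty}\mu_i(X\setminus C)\ \le\ 1-\mu(X\setminus C)\ =\ \mu(C),
\]
where we used $\mu_i,\mu\in\mc M(X)$, so that $\mu_i(X)=\mu(X)=1$.

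Applying both inequalities to the Borel set $V$, with interior $V^{\,\circ}$ and closure $\overline V$, I obtain
\[
\mu(V^{\,\circ})\ \le\ \liminf_{i\to\infty}\mu_i(V^{\,\circ})\ \le\ \liminf_{i\to\infty}\mu_i(V)\ \le\ \limsup_{i\to\infty}\mu_i(V)\ \le\ \limsup_{i\to\infty}\mu_i(\overline V)\ \le\ \mu(\overline V).
\]
Since $\partial V=\overline V\setminus V^{\,\circ}$ and $\mu(\partial V)=0$, we have $\mu(V^{\,\circ})=\mu(\overline V)=\mu(V)$, so the whole chain collapses to equalities and $\lim_{i\to\infty}\mu_i(V)=\mu(V)$.

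There is no real obstacle in this argument; the only point deserving a line of care is the construction of the approximants $g_n$ and the verification that they are continuous and increase pointwise to $\mathbf 1_U$, which uses nothing beyond the continuity of the distance function on a metric space and the fact that $\di(x,X\setminus U)>0$ exactly on $U$.
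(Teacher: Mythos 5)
Your proof is correct and follows essentially the same route as the paper: establish $\liminf_i\mu_i(U)\ge\mu(U)$ for open $U$ and $\limsup_i\mu_i(C)\le\mu(C)$ for closed $C$, then squeeze $V$ between $V^{\circ}$ and $\overline V$ using $\mu(\partial V)=0$. The only (cosmetic) difference is that you build the continuous approximants of $\mathbf 1_U$ explicitly via $\min\{1,n\,\di(\cdot,X\setminus U)\}$ and monotone convergence, whereas the paper invokes Urysohn's lemma together with inner approximation by closed sets and the fact that open sets in a metric space are $F_\sigma$.
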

\begin{proof}
First, we show that for any open set $U$ and closed set $F\subset U$
\begin{equation*}
\limsup_{i\to\infty}\mu_{i}(F)\le \mu(U).
\end{equation*}
It is trivial when $F=\emptyset$. Otherwise, by applying Urysohn's lemma,  there exists a continuous function $g:X\to[0,1]$ with support in $U$ and $g(z)=1$ when $z\in F$. Then
\begin{align*}
\limsup_{i\to\infty}\mu_{i}(F)&=\limsup_{i\to\infty}\int_{F}1d\mu_{i}=\limsup_{i\to\infty}\left(\int_{X}gd\mu_{i}-\int_{X-F}gd\mu_{i}\right)\\
&\le \limsup_{i\to\infty}\int_{X}gd\mu_{i}=\int_{X}gd\mu\le \mu(U).
\end{align*}
Note that $X-F$ is open, $X-U$ is closed and $X-U\subset X-F$. Therefore
$$1-\liminf_{i\to\infty}\mu_{i}(U)=\limsup_{i\to\infty} \mu_{i}(X-U)\le \mu(X-F)=1-\mu(F),$$
which implies that
\begin{equation*}
\liminf_{i\to\infty}\mu_{i}(U)\ge \mu(F).
\end{equation*}
Also note that $U$ is an $F_{\s}$ set. Then,  by the arbitrariness of $F$, we have that
\begin{equation}\label{E:Weak*1}
\liminf_{i\to\infty} \mu_{i}(U)\ge \mu(U).
\end{equation}
By the arbitrariness of $U$ and applying (\ref{E:Weak*1}) to $X-U$, we obtain that for any closed subset $F\subset X$
\begin{equation}\label{E:Weak*2}
\limsup_{i\to\infty}\mu_{i}(F)\le \mu(F).
\end{equation}
For any Borel set $V$, applying (\ref{E:Weak*1}) and (\ref{E:Weak*2}) on $V^{o}$, the interior of $V$, and on $\overline V$, the closure of $V$, respectively, we have that
$$\mu(V^{o})\le \liminf_{i\to\infty}\mu_{i}(V^{o})\le \limsup_{i\to\infty}\mu_{i}(\overline V)\le \mu(\overline V).$$
If $\mu(\partial V)=0$, then $\mu(V^{o})=\mu(\overline V)$ and the above inequalities become equalities. This competes the proof.
\end{proof}

 \addcontentsline{toc}{section}{References}

\bibliographystyle{plain}

 \end{document}